\documentclass[11pt,reqno]{amsart} 

\usepackage[utf8]{inputenc}
\usepackage[a4paper]{geometry}

\usepackage{amsmath}
\usepackage{amsfonts}
\usepackage{amssymb}
\usepackage{amsthm}
\usepackage{paralist}

\usepackage{natbib}
\usepackage{color}
\usepackage[colorlinks=true,linkcolor=blue,citecolor=blue,pdfborder={0 0 0}]{hyperref}

\usepackage{dsfont}
\usepackage{bm}

\usepackage{graphicx}
\usepackage{enumerate}

\newtheorem{theorem}{Theorem}[section]

\newtheorem{lemma}[theorem]{Lemma}
\newtheorem{corollary}[theorem]{Corollary}

\theoremstyle{definition}

\newtheorem{condition}[theorem]{Condition}

\theoremstyle{remark}

\numberwithin{equation}{section}

\newcommand{\dto}{\rightsquigarrow}

\newcommand{\R}{\mathbb{R}}

\newcommand{\Z}{\mathbb{Z}}
\newcommand{\N}{\mathbb{N}}
\newcommand\Prob{\mathbb{P}}    

\newcommand{\ip}[1]{\lfloor #1 \rfloor}

\newcommand{\GG}{\mathbb{G}}
\newcommand{\PP}{\mathbb{P}}

\newcommand{\Nc}{\mathcal{N}}
\newcommand{\Fc}{\mathcal{F}}

\newcommand{\scs}{\scriptscriptstyle}

\newcommand{\abs}[1]{\left\vert{#1}\right\vert}

\newcommand{\ind}{\operatorname{\bf{1}}}
\newcommand{\diff}{{\,\mathrm{d}}}


\newcommand{\e}{\mathrm{e}}

\newcommand{\Exp}{\operatorname{E}}
\newcommand{\Var}{\operatorname{Var}}
\newcommand{\Cov}{\operatorname{Cov}}

\newcommand{\RL}{\operatorname{RL}}

\newcommand{\argmax}{\operatornamewithlimits{\arg\max}}

%
%
%
 
%

\allowdisplaybreaks[4]

\begin{document}

\title[Sliding Blocks Estimators]{Inference for heavy tailed stationary time series based on sliding blocks}

\author{Axel B\"ucher}
\address{Ruhr-Universit\"at Bochum, Fakult\"at f\"ur Mathematik, Universit\"atsstr.\ 150, 44780 Bochum, Germany}
\email{axel.buecher@rub.de}

\author{Johan Segers}
\address{Universit\'{e} catholique de Louvain, Institut de Statistique, Biostatistique et Sciences Actuarielles, Voie du Roman Pays~20, B-1348 Louvain-la-Neuve, Belgium}
\email{johan.segers@uclouvain.be}

\date{\today}

\begin{abstract}
The block maxima method in extreme value theory consists of fitting an extreme value distribution to a sample of block maxima extracted from a time series. Traditionally, the maxima are taken over disjoint blocks of observations. Alternatively, the blocks can be chosen to slide through the observation period, yielding a larger number of overlapping blocks. Inference based on sliding blocks is found to be more efficient than inference based on disjoint blocks. The asymptotic variance of the maximum likelihood estimator of the Fr\'echet shape parameter is reduced by more than 18\%. Interestingly, the amount of the efficiency gain is the same whatever the serial dependence of the underlying time series: as for disjoint blocks, the asymptotic distribution depends on the serial dependence only through the sequence of scaling constants. The findings are illustrated by simulation experiments and are applied to the estimation of high return levels of the daily log-returns of the Standard \& Poor's~500 stock market index. \medskip

\noindent \textit{Key words:} Apéry's constant; block maxima; Fr\'echet distribution; maximum likelihood estimator; Mar\-shall--Olkin distribution; Pickands dependence function; return level.
\end{abstract}

\maketitle

\section{Introduction}

Two major paradigms in extreme value theory are the block maxima method and the peaks-over-threshold method. The former, more traditional one consists of fitting an extreme value distribution to a sample of block maxima extracted from a (perhaps latent) underlying sample. The latter method consists of fitting a generalized Pareto distribution to the excesses in a sample over a high threshold.

Although the peaks-over-threshold method has become the standard one, there has been a renewed interest recently in the block maxima method, and more specifically in its asymptotic properties. The set-up is that of a triangular array of block maxima extracted from a stationary time series. The block size, $r$, tends to infinity as the sample size, $n$, tends to infinity, in such a way that the number of (disjoint) blocks, approximately $n/r$, tends to infinity as well. The underlying sequence of random variables can be independent \citep{FerDeh15, Dom15, DomFer17} or can exhibit serial dependence \citep{BucSeg14, BucSeg18}.

Usually, maxima are taken over disjoint blocks of observations. For instance, for a sequence of daily observations, one may extract weakly, monthly, quarterly or yearly maxima, see, e.g., \cite{Mcn98} and \cite{Lon00} or \cite{KatParNav02} for applications in finance or hydrology, respectively. Alternatively, one can slide a block or window of a given size through the sample and consider the corresponding maxima. Obviously, the blocks will be overlapping and thus dependent, even if the underlying sequence of random variables is independent. Still, as soon the underlying sequence is stationary, then so are the sliding block maxima. Moreover, the sample of sliding block maxima carries more information than the sample of disjoint block maxima, which suggests the possibility of more accurate inference. \cite{RobSegFer09}, \cite{Nor15} and \cite{BerBuc16} applied this idea to the estimation of the extremal index, a summary measure for the strength of serial dependence between extremes. They found that estimators based on sliding blocks were indeed more efficient than their counterparts based on disjoint blocks.

Here, we investigate the potential benefits of using maxima over sliding blocks rather than over disjoint blocks for fitting extreme value distributions. More precisely, we seek the asymptotic distribution of the maximum (quasi-)likelihood estimator for the shape and scale parameters of a Fr\'echet distribution. The likelihood is computed as if the sliding block maxima are independent, although they are not, as blocks may overlap.

The solution is based on Theorem~2.5 in \cite{BucSeg18}, which states high-level conditions for the consistency and asymptotic normality of the maximum likelihood estimator of the Fr\'echet parameter vector based on a general triangular array of dependent random variables. The biggest challenge is the computation of the estimator's asymptotic covariance matrix. In the course of the computations, we find new formulas for moments of pairs of jointly max-stable random variables in terms of their Pickands dependence function. These formulas are then applied to the bivariate Marshall--Olkin distribution, which describes the joint asymptotic distribution of the two maxima over a pair of overlapping blocks. The Marshall--Olkin parameter is a function of the proportion of overlap between the two blocks.

We find that the maximum likelihood estimator based on sliding blocks is more efficient than the maximum likelihood estimator based on disjoint blocks. For the estimator of the Fr\'echet shape parameter, the reduction in asymptotic variance is more than 18\%. Remarkably, this number does not depend on the serial dependence of the underlying stationary time series, in accordance to the findings for disjoint blocks in \cite{BucSeg18}. Moreover, the efficiency gain carries over to the estimation of high return levels. The asymptotic results are confirmed in numerical experiments. We illustrate the method by estimating high quantiles of quarterly maxima of daily log-returns of the S\&P500 index. The Monte Carlo simulations reveal another benefit of using sliding blocks: it makes the estimator more stable as a function of the block size.

The maximum likelihood estimator is defined and its asymptotic distribution is stated in Section~\ref{sec:main}. Estimation of high return levels is considered in Section~\ref{sec:return}, both theoretically and through a case study, followed by the results of a Monte Carlo simulation experiment in Section~\ref{sec:simul}. The proofs are given in Section~\ref{sec:proofs} while the covariance calculations are deferred to Appendix~\ref{app:cov}.

\section{Inference based on sliding blocks}
\label{sec:main}

\subsection{The maximum likelihood estimator based on sliding block maxima}

Let $(X_t)_{t\in \Z}$ be a strictly stationary time series: for any $k\in\N$ and for any $h, t_1, \dots, t_k\in \Z$, the distribution of $(X_{t_1+h}, \dots, X_{t_k+h})$ is the same as the distribution of $(X_{t_1}, \dots, X_{t_k})$. Further, let $P_{\alpha, \sigma}$ denote the Fr\'echet distribution with parameter $\theta = (\alpha, \sigma)' \in (0, \infty)^2$, given by its distribution function $P_{\alpha, \sigma}([0, x]) = \exp\{ -(x/\sigma)^{-\alpha} \}$ for $x > 0$. We assume the following maximum domain-of-attraction condition. The arrow $\dto$ denotes convergence in distribution. 

\begin{condition}[Max-domain of attraction] \label{cond:max}
For some $\alpha_0 \in (0,\infty)$, there exists a sequence $(\sigma_r)_{r\in\N}$, regularly varying at infinity with index $1/\alpha_0$, such that
\[
\max(X_1, \dots, X_r) / \sigma_r \dto P_{\alpha_0,1}, \qquad r \to \infty.
\]
\end{condition}

Regular variation of the sequence $(\sigma_r)_r$ with index $1/\alpha_0$ means that $\lim_{r \to \infty} \sigma_{\scs \lfloor rs \rfloor} / \sigma_r = s^{1/\alpha_0}$ for every $s > 0$, where $\lfloor \, \cdot \, \rfloor$ denotes the integer part.
A sufficient condition is that the common univariate distribution of the variables $X_t$ is in the max-domain of attraction of the Fr\'echet distribution (see Section~\ref{subsec:iid}) and that the extremal index $\theta$ of the time series $(X_t)_t$ exists and is positive \citep{leadbetter:1983}.

Suppose we observe a finite stretch of the time series, $X_1, \dots, X_n$. For some integer $r\in \{1, \dots, n\}$, let
\[
  M_{r,t} = M_{t:(t+r-1)} = \max\{ X_{t}, \dots, X_{t+r-1} \}, \qquad t = 1, \dots, n-r+1,
\]
denote the maximum over the $r$ successive observations starting at time point $t$. The sequence $M_{r,1}, \dots, M_{r,k}$ with $k=n-r+1$ is referred to as the sequence of  \textit{sliding block maxima}. In contrast, the classical block maxima method in extreme value statistics is based on the sequence of \textit{disjoint block maxima} $M_{r,1}, M_{r,r+1}, \dots, M_{r, (m-1)r+1}$, where $m=\lfloor n/r \rfloor$. The common big blocks/small blocks heuristics suggests that the latter sequence may be regarded as asymptotically independent and Fr\'echet distributed. Any sensible estimator within the statistical model $\mathcal P= \{ P_\theta^{\otimes m}: \theta=(\alpha, \sigma)' \in (0,\infty)^2\}$  is hence a sensible estimator when applied to the sample of disjoint block maxima as well.

Unfortunately, this idea cannot be directly transferred to the sample of sliding block maxima, as that sequence is certainly not asymptotically independent, not even for an underlying iid time series. Still, the sample of sliding block maxima is stationary and the asymptotic distribution of a single such block maximum is Fr\'echet. We can therefore estimate the Fr\'echet parameters by moment matching, for instance. Being based on empirical moments only, the maximum likelihood estimator for independent sampling from the Fr\'echet distribution (model $\mathcal P$) is a case in point.

Existence and uniqueness of the maximum likelihood estimator in model $\mathcal P$ is studied in Section~2.1 of \cite{BucSeg18}. The estimator is defined as
\begin{equation}
\label{eq:MQLE}
  (\hat \alpha_n, \hat \sigma_n)
  =
  \argmax_{ \theta \in (0, \infty)^2 } \sum_{t=1}^{n-r+1} \ell_\theta(X_{n,t}),
\end{equation}
where $\ell_\theta(x) = \log \diff \exp\{ - (x/\sigma)^{-\alpha} \} / \diff x$ is the contribution of an observation at $x > 0$ to the Fr\'echet log-likelihood of the parameter vector $\theta$ and where
\begin{equation}
\label{eq:Xnt}
X_{n,t} = M_{r,t} \vee c
\end{equation}
with an arbitrary truncation constant $c>0$. The reason for the left-truncation is that otherwise some of the block maxima could be zero or negative. Asymptotically, the truncation constant does not matter thanks to Condition~\ref{cond:div} below. In practice, it should be chosen as small as possible, e.g., equal to the square root of the machine precision.  
By Lemma~2.1 in \cite{BucSeg18}, the maximizer exists and is unique as soon as the $n-r+1$ values $X_{n,1}, \ldots, X_{n,n-r+1}$ are not all equal, which our conditions will guarantee to occur with probability tending to one. Note that the likelihood is constructed as if the sliding block maxima were independent, although they are not, not even if the underlying sequence is independent, since some blocks overlap. Therefore, the estimator may be more accurately referred to as a maximum quasi-likelihood estimator.

\subsection{Asymptotic normality}

For our main result, we need a couple of additional conditions.  First of all, let $r=r_n$ be an integer sequence tending to infinity such that $r_n=o(n)$.
The next condition is a slight adaptation of Condition 3.2 in \cite{BucSeg18}.

\begin{condition}[All block maxima of size $\ip{r_n/2}$ diverge] \label{cond:div}
For every $c>0$, the probability of the event that all disjoint block maxima of size $\tilde r_n=\ip{r_n/2}$ are larger than $c$ converges to 1.
\end{condition}

The condition in fact implies that the probability of the event that all sliding block maxima of size $r_n$ are larger than $c$ converges to 1 as well. It therefore guarantees that the left-truncation above does not matter asymptotically. In Section~\ref{subsec:iid}, the condition will be shown to hold for iid time series, provided the block sizes are not too small.

The following three conditions are Conditions~3.3, 3.4 and 3.5 in \cite{BucSeg18}. The alpha-mixing coefficients of the sequence $(X_t)_t$ are defined as
\[
  \alpha(k) 
  = 
  \sup \{ 
    \lvert \Prob(A \cap B) - \Prob(A) \Prob(B) \rvert : 
    A \in \sigma(X_j, j \le i), \, B \in \sigma(X_{j+k}, j \ge i), \, i \in \Z 
  \},
\]
for $k = 1, 2, \ldots$.

\begin{condition}[$\alpha$-Mixing with rate] \label{cond:alpha}
We have $\lim_{\ell \to \infty}\alpha(\ell) = 0$. Moreover, there exists $\omega>0$ such that 
\[
\lim_{n\to\infty} (n/r_n)^{1+\omega} \alpha(r_n) = 0.
\]
\end{condition}

\begin{condition}[Moments] \label{cond:moment}
There exists $\nu>2/\omega$ with $\omega$ from Condition~\ref{cond:alpha} such that
\begin{equation}
\label{eq:moment}
\limsup_{r\to\infty} \Exp\big[ g_{\nu, \alpha_0} \big( (M_{r,1} \vee 1) / \sigma_r \big) \big] < \infty,
\end{equation}
where $g_{\nu,\alpha_0}(x) = \{ x^{-\alpha_0} \ind(x \le \e) + \log(x) \ind(x > \e) \}^{2+\nu}$.
\end{condition}

An elementary argument shows that if Condition~\ref{cond:moment} holds, then \eqref{eq:moment} continues to hold with $M_{r,1} \vee 1$ replaced by $M_{r,1}\vee c$, for arbitrary $c > 0$.

Let $P=P_{\alpha_0,1}$ and write $Pf= \int_0^\infty f(x) \diff P(x)$ for a real-valued function on $(0,\infty)$.  Further, let
\begin{align} \label{eq:fij}
f_1(x) = x^{-\alpha_0} \log(x), \qquad  f_2(x) = x^{-\alpha_0}, \qquad f_3(x) = \log(x)
\end{align}
and note that, by Lemma B.1 in \cite{BucSeg18},
\[
  Pf_1 = -\alpha_0^{-1} \Gamma'(2) = \alpha_0^{-1} (\gamma - 1), \qquad 
  Pf_2 = \Gamma(2) = 1, \qquad 
  Pf_3 = -\alpha_0^{-1} \Gamma'(1) = \alpha_0^{-1} \gamma,
\]
where $\Gamma$  and $\Gamma'$ denote the gamma function $\Gamma(z) = \int_0^\infty t^{z-1} \e^{-t} \diff t$ and its derivative, respectively, while $\gamma = 0.5772\ldots$ denotes the Euler--Mascheroni constant. 

\begin{condition}[Bias] \label{cond:bias}
There exists $c_0>0$ such that for $j=1,2,3$,
\[
\lim_{n\to\infty} \sqrt{\frac{n}{r_n}} \Big(\Exp\big[ f_j \big( (M_{r,1} \vee c_0) / \sigma_r \big) \big] - Pf_j  \Big) = B(f_j)
\]
with $f_j$ as defined in \eqref{eq:fij}.
\end{condition}

As an interesting consequence of Theorem~\ref{theo:weak} below, the limit $B(f_j)$ can be seen not to depend on the constant $c_0$ under the conditions of the theorem.

Finally, we define the empirical process based on the left-truncated sliding block maxima in \eqref{eq:Xnt} as
\[
\GG_n f = \sqrt{\frac{n}{r_n}} \bigg(k_n^{-1} \sum_{t=1}^{k_n} f(X_{n,t}/\sigma_{r_n}) - Pf \bigg).
\]
Recall the maximum (quasi-)likelihood estimator in \eqref{eq:MQLE}. The following theorem is the main result of this paper.

\begin{theorem} \label{theo:weak}
Suppose Conditions~\ref{cond:max}--\ref{cond:bias} are met. 
Then, for any $c>0$ and with probability tending to one, there exists a unique maximizer $(\hat{\alpha}_n, \hat{\sigma}_n)$ of the Fr\'echet log-likelihood based on the left-truncated sliding block maxima $X_{n,1}, \dots, X_{n,k_n}$, see \eqref{eq:MQLE}, and we have, as $n \to \infty$,
\begin{align*}
   \sqrt{m_n} \,
  \begin{pmatrix} 
    \hat{\alpha}_n - \alpha_0  \\ 
    \hat{\sigma}_n / \sigma_{r_n} - 1  
  \end{pmatrix} 
  &= 
  M(\alpha_0)
  \begin{pmatrix}
    \GG_n x^{-\alpha_0} \log(x) \\
    \GG_n x^{-\alpha_0} \\
    \GG_n \log(x)
  \end{pmatrix} + o_\Prob(1) 
  \dto
  \mathcal{N}_2 \bigl( M(\alpha_0) \, B, \; \bm \Sigma(\alpha_0) \bigr).
\end{align*}
Here,  $m_n= n/r_n$, $B=(B(f_1), B(f_2), B(f_3))'$ as in Condition~\ref{cond:bias} and
\begin{align*}
  M(\alpha_0)
  &=
 \frac{6}{\pi^2} 
 \begin{pmatrix}
    \alpha_0^2 & \alpha_0(1-\gamma) & - \alpha_0^2 \\
    \gamma-1 & -(\Gamma''(2) +1) / \alpha_0 &  1-\gamma
  \end{pmatrix}, \\
\bm \Sigma(\alpha_0) &= M(\alpha_0) \bm \Sigma_{\bm Y} M(\alpha_0)^T \approx
  \left(\begin{array}{ll}
\phantom{-}0.4946 \, \alpha_0^2 & -0.3236  \\
 -0.3236 & \phantom{-}0.9578 \, \alpha_0^{-2}
 \end{array}
 \right), 
\end{align*}
with $\bm \Sigma_{\bm Y}=(\sigma_{ij})_{i,j=1}^{3}$ as in Corollary~\ref{cor:cov} and equation~\eqref{eq:cov}. 
\end{theorem}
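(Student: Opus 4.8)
The plan is to reduce the statement to Theorem~2.5 of \cite{BucSeg18}, applied to the triangular array of left-truncated sliding block maxima $X_{n,1}, \dots, X_{n,\kn}$ with $\kn = n - r_n + 1$. That theorem is formulated for a general array of (possibly dependent) variables, and once its hypotheses are verified it simultaneously delivers the existence and uniqueness of the maximiser with probability tending to one, the linear expansion
\[
  \sqrt{m_n}
  \begin{pmatrix} \hat\alpha_n - \alpha_0 \\ \hat\sigma_n/\sigma_{r_n} - 1 \end{pmatrix}
  = M(\alpha_0)\, (\GG_n f_1, \GG_n f_2, \GG_n f_3)' + o_\Prob(1),
\]
and the explicit form of $M(\alpha_0)$; the latter is the generic object produced by the i.i.d.\ Fr\'echet likelihood computation (inverse Fisher information composed with the linear map that writes the Fr\'echet score in terms of $f_1, f_2, f_3$), so it is the \emph{same} matrix as for disjoint blocks and does not depend on the serial dependence at all. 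The divergence hypothesis that the theorem requires for the \emph{sliding} array follows from Condition~\ref{cond:div} through the remark immediately after it, and the mixing hypothesis transfers from Condition~\ref{cond:alpha} because $X_{n,t}$ and $X_{n,t+\ell}$ are measurable with respect to $\sigma$-fields of the original series separated by $\ell - r_n$ time units. The whole problem is thus reduced to two tasks: (a) the trivariate central limit theorem $(\GG_n f_1, \GG_n f_2, \GG_n f_3)' \dto \mathcal{N}_3(B, \bm\Sigma_{\bm Y})$; and (b) the identification of the covariance matrix $\bm\Sigma_{\bm Y} = (\sigma_{ij})_{i,j=1}^3$.

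For task (a) I would run a big-blocks/small-blocks (Bernstein) decomposition of $\{1, \dots, \kn\}$ into alternating long and short stretches whose lengths are multiples of $r_n$: Condition~\ref{cond:alpha} allows one to replace the long-stretch contributions by independent copies at a total cost $o(1)$; the short-stretch contributions are shown to be $o_\Prob(1)$ in $L^2$; and a Lyapunov condition for the remaining sum of independent summands is verified using Condition~\ref{cond:moment}, whose envelope $g_{\nu,\alpha_0}$ dominates $\abs{f_j}^{2+\nu}$ up to constants. The truncation level $c$ is immaterial by Condition~\ref{cond:div} and may be interchanged with the constant $c_0$ from Condition~\ref{cond:bias}, which fixes the asymptotic mean at $B$.

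For task (b), observe that because consecutive sliding block maxima share almost all of their underlying observations, the variance does not collapse despite the normalisation by $\sqrt{m_n}$ rather than $\sqrt{\kn}$: a direct computation gives
\[
  \Var(\GG_n f_j)
  = \frac{1}{r_n} \sum_{\abs{s} < r_n}
    \Cov\Bigl( f_j(M_{r_n,1}/\sigma_{r_n}),\; f_j(M_{r_n,1+\abs{s}}/\sigma_{r_n}) \Bigr) + o(1),
\]
the lags $\abs{s} \ge r_n$ contributing negligibly by Condition~\ref{cond:alpha} together with the uniform integrability from Condition~\ref{cond:moment}. For $0 \le s < r_n$, decomposing the two overlapping windows into a left-only block of length $s$, a shared block of length $r_n - s$ and a right-only block of length $s$, and invoking Condition~\ref{cond:max}, the regular variation of $(\sigma_r)_r$ (so that $\sigma_{\ip{\beta r_n}}/\sigma_{r_n} \to \beta^{1/\alpha_0}$), and Condition~\ref{cond:alpha}, the normalised triple of sub-block maxima converges along $s/r_n \to \beta$ to three independent Fr\'echet variables with ``masses'' $\beta$, $1-\beta$ and $\beta$; equivalently, $(M_{r_n,1}/\sigma_{r_n}, M_{r_n,1+s}/\sigma_{r_n})$ converges to a bivariate max-stable (Marshall--Olkin) pair $(\tilde M_1^{(\beta)}, \tilde M_2^{(\beta)})$ whose Pickands dependence function is $A_\beta(w) = \beta + (1-\beta)\max(w, 1-w)$, the Marshall--Olkin parameter being the overlap fraction $1-\beta$. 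The Riemann sum above therefore converges, and more generally
\[
  \sigma_{ij}
  = \int_0^1 \Bigl\{
      \Cov\bigl( f_i(\tilde M_1^{(\beta)}), f_j(\tilde M_2^{(\beta)}) \bigr)
    + \Cov\bigl( f_j(\tilde M_1^{(\beta)}), f_i(\tilde M_2^{(\beta)}) \bigr)
    \Bigr\}\, \diff\beta .
\]
These integrals are evaluated in closed form using the moment formulas for pairs of jointly max-stable variables expressed through their Pickands dependence function, developed in Appendix~\ref{app:cov} (Corollary~\ref{cor:cov} and equation~\eqref{eq:cov}); the resulting constants feature $\zeta(2) = \pi^2/6$ and Ap\'ery's constant $\zeta(3)$. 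Substituting the matrix $\bm\Sigma_{\bm Y}$ so obtained into $\bm\Sigma(\alpha_0) = M(\alpha_0)\,\bm\Sigma_{\bm Y}\, M(\alpha_0)^T$ yields the displayed numerical matrix.

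The main obstacle is task (b), in two respects. First, one must make rigorous the asymptotic independence of the three sub-block maxima together with the correct marginal ``masses''; this rests on Condition~\ref{cond:max} being coherent across scales (which holds under the sufficient condition mentioned after it) and on the mixing in Condition~\ref{cond:alpha}, and it includes the dominated-convergence step — justified by the uniform integrability from Condition~\ref{cond:moment} — that turns the Riemann sum into the integral over the overlap fraction. Second, and more substantially, one must actually compute $\int_0^1 \Cov(f_i(\tilde M_1^{(\beta)}), f_j(\tilde M_2^{(\beta)}))\, \diff\beta$ for all pairs $(i,j)$; this is the analytic core of the paper and the source of Ap\'ery's constant.
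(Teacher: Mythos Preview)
Your proposal is correct and follows essentially the same route as the paper: reduction to Theorem~2.5 of \cite{BucSeg18}, a Bernstein big-blocks/small-blocks argument for the trivariate CLT, identification of the limiting pairwise law of two overlapping rescaled block maxima as Marshall--Olkin with parameter equal to the relative lag, and the resulting integral formula $\sigma_{ij} = 2\int_0^1 \Cov_{\alpha_0,\xi}(f_i(Z_1),f_j(Z_2))\,\diff\xi$ evaluated via Appendix~\ref{app:cov}. Two minor differences in organisation: the paper also separately verifies the weak law $\PP_n f \to Pf$ for all $f$ in the class $\Fc_2(\alpha_-,\alpha_+)$ (a hypothesis of Theorem~2.5 in \cite{BucSeg18} that your CLT argument delivers as a by-product via the variance bound), and in computing the asymptotic covariance the paper first groups the sliding maxima into consecutive packets of $r_n$ terms and reduces to $\Cov(A_2, B_1+B_2+B_3)/r_n^2$ (Lemma~\ref{lem:cov}) rather than summing over lags directly as you do---but both lead to the same Riemann sum and the same limit.
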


It is interesting to note that the limiting covariance matrix is substantially smaller than for the estimator based on disjoint blocks, see Theorem 3.6 in \cite{BucSeg18}: 
\begin{equation*}
  I_{(\alpha_0,1)}^{-1} 
  = 
  \frac{6}{\pi^2} 
  \begin{pmatrix}
    \alpha_0^2 & (\gamma-1) \\ 
    (\gamma-1) &  \alpha_0^{-2} \{ (1-\gamma)^2 +\pi^2/6 \} 
  \end{pmatrix}
  \approx
  \left(\begin{array}{ll}
    \phantom{-}0.6080 \, \alpha_0^2 & -0.2570 \\ 
    -0.2570 &  \phantom{-}1.1087 \, \alpha_0^{-2}
  \end{array}\right).
\end{equation*}
The improvement is independent of the value of $\alpha_0$ and of the serial dependence of the time series (e.g., of any of the characteristics like the extremal index or the cluster distribution). In particular, the quotient of the asymptotic variances for the shape and scale parameters are $0.8135$ and $0.8639$, respectively. 

More generally, the delta method implies that the asymptotic distribution of a tail quantity that can be written as a smooth function of $(\hat{\alpha}_n, \hat{\sigma}_n)'$ will be normal with asymptotic variance equal to $\bm \beta' I_{(\alpha_0, 1)}^{-1} \bm \beta$ (disjoint blocks) or $\bm \beta' \bm\Sigma(\alpha_0) \bm \beta$ (sliding blocks), where $\bm \beta$ is a $2 \times 1$ vector of partial derivatives depending on the estimator. The ratio of asymptotic variances is thus equal to
\begin{equation}
\label{eq:ratio}
  \frac%
    {\bm \beta' \bm\Sigma(\alpha_0) \bm \beta}%
    {\bm \beta' I_{(\alpha_0, 1)}^{-1} \bm \beta}.
\end{equation}
By the method of Lagrange multipliers, this ratio can be found to attain its minimum and maximum at the smallest and largest eigenvalues of the matrix $\bm\Sigma(\alpha_0) (I_{\scs (\alpha_0, 1)}^{-1})^{-1} = \bm\Sigma(\alpha_0) I_{(\alpha_0, 1)}$. Independently of $\alpha_0$, these eigenvalues are equal to $0.9413$ and $0.6448$, respectively, and provide precise lower and upper bounds to the ratio in \eqref{eq:ratio}.

Should the underlying time series be positive, one might be tempted to omit the left-truncation introduced above (which amounts to setting $c=0$). Working out the asymptotic theory is possible, but at the cost of more complicated conditions. Even if $X_t > 0$ almost surely, the lower tail of the random variable $X_t$ in a neighbourhood of zero must not be too heavy for the moment and bias conditions to be true without truncation. In practice, left-truncation at a suitable small constant appears to be non-restrictive anyway whence we do not pursue this issue any further.

\subsection{Sliding block maxima extracted from an iid sequences}
\label{subsec:iid}

If the sequence $(X_t)_t$ is iid, then all conditions can be expressed in terms of the univariate, marginal distribution function $F(x) = \Prob(X_t \le x)$. Condition~\ref{cond:max} is equivalent to regular variation of the function $- \log F$ at infinity with index $-\alpha_0$, that is,
\begin{equation}
\label{eq:RV}
  \lim_{u \to \infty} \frac{- \log F(ux)}{- \log F(u) } = x^{-\alpha_0},
  \qquad x \in (0, \infty).
\end{equation}
The scaling sequence can be chosen as $\sigma_r = \inf \{ u \ge 1 : F(u) \ge \e^{-1/r} \}$, for $r = 1, 2, \ldots$.

To control the bias (Condition~\ref{cond:bias}), we need to reinforce regular variation in~\eqref{eq:RV} to second-order regular variation of the function $-\log F$ together with a growth restriction on the block size sequence $(r_n)_n$. The following condition is identical to Condition~4.1 in \citet[Section~4]{BucSeg18}; see also Remark~4.3 therein for additional context. For $\tau \in \R$, define $h_\tau : (0, \infty) \to \R$ by
\begin{equation*}
  h_\tau(x) 
  = 
  \int_1^x y^{\tau - 1} \, \diff y
  =
  \begin{cases}
    \dfrac{x^\tau - 1}{\tau}, & \text{if $\tau \neq 0$,} \\[1ex]
    \log(x), & \text{if $\tau = 0$.}
  \end{cases}
\end{equation*}

\begin{condition}[Second-Order Condition]
\label{cond:secor}
There exists $\alpha_0 \in (0, \infty)$, $\rho \in (-\infty, 0]$, and a real function $A$ on $(0, \infty)$ of constant, non-zero sign such that $\lim_{u \to \infty} A(u) = 0$ and such that, for all $x \in (0, \infty)$,
\begin{equation}
\label{eq:SV:2}
  \lim_{u \to \infty} 
  \frac{1}{A(u)} 
  \left( 
    \frac{-\log F(ux)}{-\log F(u)} - x^{-\alpha_0} 
  \right) 
  = 
  x^{-\alpha_0} \, h_\rho(x).
\end{equation}
\end{condition}

Let $\psi = \Gamma' / \Gamma$ denote the digamma function. For $(\alpha_0, \rho) \in (0, \infty) \times (-\infty, 0]$, define the bias function
\begin{equation}
\label{eq:bias:iid}
  B(\alpha_0, \rho) 
  = 
  - \frac{6}{\pi^2} 
  \begin{pmatrix} 
    b_1(\abs{\rho}/\alpha_0) \\ 
    b_2(\abs{\rho}/\alpha_0) / \alpha_0^2
  \end{pmatrix},
\end{equation}
where
\begin{equation*}
  b_1(x) = 
  \begin{cases}
    (1+x) \, \Gamma ( x ) \{ \gamma  + \psi(1+x) \}, & \text{if $x > 0$,} \\
    \dfrac{\pi^2}{6}, & \text{if $x = 0$,}
  \end{cases}
\end{equation*}
and
\begin{equation*}
  b_2(x) = 
  \begin{cases} 
    -\dfrac{\pi^2}{6x} + 
      (1+x) \, \Gamma ( x )  
      \{ \Gamma''(2) + \gamma + (\gamma-1) \, \psi(1 + x)\},
    & \text{if $x > 0$,} \\
    0, & \text{if $x = 0$}.
  \end{cases}
\end{equation*} 
The graphs of these functions are depicted in Figure~1 in \cite{BucSeg18}.

\begin{theorem}
\label{theo:ml}
Let $X_1, X_2, \ldots$ be independent random variables with common distribution function $F$ satisfying Condition~\ref{cond:secor}.
Let the block sizes $r_n$ be such that $r_n \to \infty$ and $m_n = \lfloor n / r_n \rfloor \to \infty$ as $n \to \infty$ and assume that 
\begin{equation}
\label{eq:ka}
  \lim_{n\to \infty} \sqrt{m_n} \, A(a_{r_n}) = \lambda \in \R.
\end{equation}
Then, for any $c>0$ and with probability tending to one, there exists a unique maximizer $(\hat{\alpha}_n, \hat{\sigma}_n)$ of the Fr\'echet log-likelihood based on the left-truncated sliding block maxima $X_{n,1}, \dots, X_{n,k_n}$, and we have
\begin{equation*}
  \sqrt{m_n} 
  \begin{pmatrix}
    \hat{\alpha}_n - \alpha_0 \\
    \hat{\sigma}_n / \sigma_{r_n} - 1
  \end{pmatrix}
  \dto 
  \Nc_2 \left( \lambda \, B(\alpha_0, \rho), \, \bm \Sigma(\alpha_0) \right),
  \qquad n \to \infty,
\end{equation*}
with $\bm \Sigma(\alpha_0)$ as in Theorem~\ref{theo:weak} and $B( \alpha_0, \rho )$ as in \eqref{eq:bias:iid}.
\end{theorem}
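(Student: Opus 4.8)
The plan is to obtain Theorem~\ref{theo:ml} as a specialisation of the general result, Theorem~\ref{theo:weak}: it suffices to verify that the i.i.d.\ hypotheses together with the second-order Condition~\ref{cond:secor} and the growth restriction~\eqref{eq:ka} imply Conditions~\ref{cond:max}--\ref{cond:bias}, and then to identify the limiting bias vector $M(\alpha_0)\,B$ of Theorem~\ref{theo:weak} with $\lambda\,B(\alpha_0,\rho)$ from~\eqref{eq:bias:iid}. The structural point that makes this work is that Conditions~\ref{cond:div}--\ref{cond:bias} are \emph{marginal}: each involves only the law of a single block maximum $M_{r,1}$ and, for Condition~\ref{cond:alpha}, the mixing coefficients of $(X_t)_t$, and none of these objects distinguishes sliding from disjoint blocks. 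As Conditions~\ref{cond:alpha}, \ref{cond:moment} and~\ref{cond:bias} are verbatim Conditions~3.3--3.5 of \cite{BucSeg18}, their verification under Condition~\ref{cond:secor} and~\eqref{eq:ka} coincides with the argument underlying Theorem~4.2 in \cite[Section~4]{BucSeg18}, which I would reuse; the genuinely new ingredients are Theorem~\ref{theo:weak} itself, which supplies the representation matrix $M(\alpha_0)$ and the sliding-blocks covariance $\bm\Sigma(\alpha_0)$, and the slightly reformulated Condition~\ref{cond:div}.

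I would check the conditions in turn. Since $m_n\to\infty$ forces $r_n=o(n)$, the standing assumption of Theorem~\ref{theo:weak} holds. For Condition~\ref{cond:max}, the second-order relation~\eqref{eq:SV:2} entails first-order regular variation of $-\log F$ with index $-\alpha_0$, i.e.~\eqref{eq:RV}; as an i.i.d.\ sequence has extremal index one, $\max(X_1,\dots,X_r)/\sigma_r\dto P_{\alpha_0,1}$ with $\sigma_r=a_r=\inf\{u\ge1:F(u)\ge\e^{-1/r}\}$ regularly varying of index $1/\alpha_0$. For Condition~\ref{cond:div}, the probability in question equals $(1-F(c)^{\tilde r_n})^{\ip{n/\tilde r_n}}$ with $\tilde r_n=\ip{r_n/2}$ in the i.i.d.\ case, so it is enough that $n\,F(c)^{\tilde r_n}\to0$; since $|A|$ is regularly varying with non-positive index, $A(a_{r_n})$ is regularly varying in $r_n$, and~\eqref{eq:ka} then forces $m_n=O(A(a_{r_n})^{-2})$, hence $r_n/\log n\to\infty$, whence $n\,F(c)^{\tilde r_n}=\exp\{\log n-\tilde r_n\lvert\log F(c)\rvert\}\to0$ because $F(c)<1$ is fixed. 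Condition~\ref{cond:alpha} holds with any $\omega>0$ because $\alpha(\ell)=0$ for $\ell\ge1$. For Condition~\ref{cond:moment}, Potter bounds on the regularly varying function $-\log F$ sandwich $-r\log F(\sigma_r x)$ between powers of $x$ uniformly for large $r$, giving uniform integrability of $g_{\nu,\alpha_0}(M_{r,1}/\sigma_r)$ for every $\nu>0$; one then picks $\omega>2/\nu$, compatibly with Condition~\ref{cond:alpha}. For Condition~\ref{cond:bias}, writing $\Prob(M_{r,1}/\sigma_r\le x)=F(\sigma_r x)^r$ and expanding $-r\log F(\sigma_r x)=x^{-\alpha_0}\{1+A(\sigma_r)\,h_\rho(x)\}+o(A(\sigma_r))$ via~\eqref{eq:SV:2}, with $r(-\log F(\sigma_r))\to1$, yields $F(\sigma_r x)^r-\e^{-x^{-\alpha_0}}=-A(\sigma_r)\,x^{-\alpha_0}h_\rho(x)\,\e^{-x^{-\alpha_0}}+o(A(\sigma_r))$; integrating $f_j$ against the resulting difference of distribution functions, with a dominated-convergence argument again based on Potter bounds, gives $\Exp[f_j((M_{r_n,1}\vee c_0)/\sigma_{r_n})]-Pf_j=A(a_{r_n})\,c_j+o(A(a_{r_n}))$ for explicit constants $c_j=c_j(\alpha_0,\rho)$ not depending on $c_0$, the truncation being immaterial because in the normalised scale it sits at the vanishing level $c_0/\sigma_{r_n}$ while $\Prob(M_{r_n,1}<c_0)$ decays geometrically in $r_n$. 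Multiplying by $\sqrt{m_n}$ and invoking~\eqref{eq:ka} gives Condition~\ref{cond:bias} with $B(f_j)=\lambda\,c_j$.

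It remains to identify $M(\alpha_0)\,B$. The constants $c_1,c_2,c_3$ depend only on the marginal law of $M_{r_n,1}$, which is common to sliding and disjoint blocks, so $B=\lambda\,(c_1,c_2,c_3)'$ equals the bias vector underlying the i.i.d.\ disjoint-blocks result of \cite{BucSeg18}; likewise $M(\alpha_0)$ is the model-dependent representation matrix determined by the Fr\'echet log-likelihood, inherited from the high-level Theorem~2.5 of \cite{BucSeg18} irrespective of the sampling scheme. Hence $M(\alpha_0)\,B=\lambda\,M(\alpha_0)(c_1,c_2,c_3)'=\lambda\,B(\alpha_0,\rho)$ by the same linear-algebra evaluation as in \cite{BucSeg18}, which expresses $b_1,b_2$ through $\Gamma$ and $\psi$; this I would either quote or redo as a routine computation of the three integrals $c_j$, using $\int_0^\infty x^{s-1}\e^{-x^{-\alpha_0}}\diff x=\alpha_0^{-1}\Gamma(-s/\alpha_0)$ for $s<0$ and differentiation in $s$. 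Substituting $B=\lambda(c_1,c_2,c_3)'$ and the covariance $\bm\Sigma(\alpha_0)$ from Theorem~\ref{theo:weak} into the conclusion of that theorem then yields $\sqrt{m_n}(\hat\alpha_n-\alpha_0,\ \hat\sigma_n/\sigma_{r_n}-1)'\dto\Nc_2(\lambda\,B(\alpha_0,\rho),\,\bm\Sigma(\alpha_0))$, which is the assertion of Theorem~\ref{theo:ml}.

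The main obstacle is Condition~\ref{cond:bias}: making the $O(A(a_{r_n}))$ expansion of $\Exp[f_j((M_{r_n,1}\vee c_0)/\sigma_{r_n})]-Pf_j$ rigorous and uniform enough to survive multiplication by $\sqrt{m_n}$, including removal of the truncation, the precise role of the normalising constant $r(-\log F(\sigma_r))$, and the degenerate case $\rho=0$ in which $h_\rho=\log$ and $A$ is merely slowly varying. This step is, however, exactly the one already carried out for disjoint blocks in \cite[Section~4]{BucSeg18} and transfers without change, as it uses only the law of $M_{r_n,1}$. The only genuinely new---but routine---points are the reformulated Condition~\ref{cond:div} with the half-block length $\tilde r_n=\ip{r_n/2}$ and the remark that~\eqref{eq:ka} forces $r_n\gg\log n$, which is what makes that condition hold for i.i.d.\ sequences.
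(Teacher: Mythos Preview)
Your proposal is correct and follows essentially the same route as the paper: reduce Theorem~\ref{theo:ml} to Theorem~\ref{theo:weak} by verifying Conditions~\ref{cond:max}--\ref{cond:bias} in the i.i.d.\ setting, noting that Conditions~\ref{cond:moment} and~\ref{cond:bias} are identical to Conditions~3.4--3.5 of \cite{BucSeg18} and hence already dealt with in the proof of Theorem~4.2 there, while Condition~\ref{cond:alpha} is trivial and Condition~\ref{cond:div} follows once one knows $\log(m_n)=o(r_n)$ (equivalently $r_n/\log n\to\infty$), which the paper extracts from~\eqref{eq:ka} via Remark~4.5 of \cite{BucSeg18} and which you derive directly by the same regular-variation argument. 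Your more explicit sketches of the Potter-bound and second-order-expansion steps are consistent with, and simply flesh out, what the paper delegates to the cited reference.
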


Compared to the estimator based on disjoint block maxima \citep[Theorem~4.2]{BucSeg18}, the asymptotic bias is the same, but the asymptotic variance is smaller, as explained after Theorem~\ref{theo:weak}.

\section{Application to return level estimation}
\label{sec:return}

\subsection{Estimator}

Let $F_r(x)=\Prob(M_{1:r} \le x)$. For $T \ge 1$, the $T$-return level of the sequence of disjoint block maxima is defined as the $1-1/T$ quantile of $F_r$, 
that is,
\[
  \RL(T,r) = F_r^{\leftarrow}(1-1/T) = \inf \{ x \in \R : F_r(x) \ge 1 - 1/T \}.
\]
Since disjoint blocks are asymptotically independent, it will take on average $T$ disjoint blocks of size $r$ until the first such block whose maximum exceeds $\RL(T, r)$.

By Condition~\ref{cond:max}, for large $r$, we may approximate $F_r$ by $G_{\alpha_0, \sigma_r}$,
 the cdf of the Fr\'echet distribution with shape parameter $\alpha_0$ and scale parameter $\sigma_r$. The quantile function of the Fr\'echet family is given by
$
G_{\alpha, \sigma}^{-1}(p) = \sigma\{-\log(p)\}^{-1/\alpha_0}.
$
A reasonable estimator of $\RL(T,r_n)$ is therefore
\[
\widehat \RL_n(T,r_n) = \hat \sigma_n b_T^{-1/\hat \alpha_n}, \qquad b_T=- \log(1-1/T).
\]
Also, let $\widetilde\RL(T,r_n) = \sigma_{r_n} b_T^{-1/\alpha_0}.$

\begin{corollary}
\label{cor:RT}
Additionally to the conditions in Theorem~\ref{theo:weak} assume the bias condition
\[
\Lambda_n(T) = \sqrt{m_n}  \left( \frac{\widetilde \RL(T,r_n) }{\RL(T,r_n) } -1 \right) \to \Lambda(T) \in \R, \qquad n \to \infty.
\]
Then, as $n\to\infty$,
\begin{align*}
\sqrt{m_n} \left( \frac{\widehat \RL_n(T,r_n) }{\RL(T,r_n) } -1 \right) 
&=
\beta(T,\alpha_0)' \cdot \sqrt{m_n} \,
  \begin{pmatrix} 
    \hat{\alpha}_n - \alpha_0  \\ 
    \hat{\sigma}_n / \sigma_{r_n} - 1  
  \end{pmatrix} 
  + \Lambda_n(T) + o_\Prob(1) \\
&\dto  \Nc\Big(\beta(T, \alpha_0) ' M(\alpha_0) \, B + \Lambda(T)\,,\,   \beta(T, \alpha_0) ' \bm \Sigma(\alpha_0) \beta(T, \alpha_0)\Big)
\end{align*}
where $\beta(T, \alpha_0) = \bigl( \alpha_0^{-2} \log(b_T), 1 \bigr)'$.
\end{corollary}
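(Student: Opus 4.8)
The plan is to obtain the result as a direct consequence of Theorem~\ref{theo:weak} via the delta method applied to the smooth map $(\alpha, s) \mapsto s \, b_T^{-1/\alpha}$. First I would write
\[
  \frac{\widehat \RL_n(T, r_n)}{\widetilde \RL(T, r_n)}
  = \frac{\hat\sigma_n}{\sigma_{r_n}} \, b_T^{1/\alpha_0 - 1/\hat\alpha_n}
  = \varphi\bigl( \hat\alpha_n, \hat\sigma_n / \sigma_{r_n} \bigr),
\]
where $\varphi(\alpha, s) = s \, b_T^{1/\alpha_0 - 1/\alpha}$ satisfies $\varphi(\alpha_0, 1) = 1$. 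The gradient of $\varphi$ at the point $(\alpha_0, 1)$ is $(\partial_\alpha \varphi, \partial_s \varphi) = (\alpha_0^{-2} \log(b_T), 1) = \beta(T, \alpha_0)'$, using that $\partial_\alpha b_T^{-1/\alpha} = b_T^{-1/\alpha} \alpha^{-2} \log b_T$ evaluated at $\alpha = \alpha_0$ (recall $b_T^{0} = 1$). Since Theorem~\ref{theo:weak} gives $\sqrt{m_n}\bigl( \hat\alpha_n - \alpha_0, \ \hat\sigma_n/\sigma_{r_n} - 1 \bigr)' = O_\Prob(1)$, a first-order Taylor expansion of $\varphi$ around $(\alpha_0, 1)$ yields
\[
  \sqrt{m_n} \left( \frac{\widehat \RL_n(T, r_n)}{\widetilde \RL(T, r_n)} - 1 \right)
  = \beta(T, \alpha_0)' \sqrt{m_n}
    \begin{pmatrix} \hat\alpha_n - \alpha_0 \\ \hat\sigma_n/\sigma_{r_n} - 1 \end{pmatrix}
    + o_\Prob(1).
\]

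Next I would convert the normalization by $\widetilde\RL$ into one by $\RL(T, r_n)$ using the assumed bias condition. Writing $\widehat\RL_n / \RL = (\widehat\RL_n / \widetilde\RL) \cdot (\widetilde\RL / \RL)$ and using $\widetilde\RL/\RL = 1 + m_n^{-1/2} \Lambda_n(T)$, one gets
\[
  \sqrt{m_n}\left( \frac{\widehat\RL_n(T, r_n)}{\RL(T, r_n)} - 1 \right)
  = \sqrt{m_n}\left( \frac{\widehat\RL_n(T, r_n)}{\widetilde\RL(T, r_n)} - 1 \right) (1 + m_n^{-1/2}\Lambda_n(T)) + \Lambda_n(T),
\]
and since the first factor is $O_\Prob(1)$ and $m_n^{-1/2}\Lambda_n(T) = o(1)$, the cross term is $o_\Prob(1)$; combined with $\Lambda_n(T) \to \Lambda(T)$ this produces the stated stochastic expansion. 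The limiting distribution then follows by plugging in the joint convergence from Theorem~\ref{theo:weak}: applying the continuous linear map $\beta(T,\alpha_0)'$ to $\Nc_2(M(\alpha_0) B, \bm\Sigma(\alpha_0))$ and shifting by the deterministic $\Lambda(T)$ gives $\Nc\bigl(\beta(T,\alpha_0)' M(\alpha_0) B + \Lambda(T), \ \beta(T,\alpha_0)' \bm\Sigma(\alpha_0)\beta(T,\alpha_0)\bigr)$ by Slutsky's lemma.

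There is essentially no hard obstacle here; the proof is a textbook delta-method argument once Theorem~\ref{theo:weak} is in hand, and the only points requiring a little care are bookkeeping ones: computing the gradient of $\varphi$ correctly (in particular that the scale derivative is exactly $1$ because $\widehat\RL_n$ is linear in $\hat\sigma_n$, and that $b_T^{1/\alpha_0 - 1/\alpha}$ equals $1$ at $\alpha_0$ so the $\alpha$-derivative reduces cleanly to $\alpha_0^{-2}\log b_T$), and ensuring the two error terms — the Taylor remainder and the cross term from the bias rescaling — are genuinely $o_\Prob(1)$, which both follow from the $O_\Prob(1)$ rate supplied by Theorem~\ref{theo:weak}. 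The existence and uniqueness of $(\hat\alpha_n, \hat\sigma_n)$ with probability tending to one is inherited verbatim from Theorem~\ref{theo:weak}, so nothing new is needed on that front.
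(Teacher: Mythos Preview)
Your proposal is correct and takes essentially the same approach as the paper: both expand $\widehat\RL_n/\widetilde\RL - 1$ linearly in $(\hat\alpha_n-\alpha_0,\ \hat\sigma_n/\sigma_{r_n}-1)$ and then pass from $\widetilde\RL$ to $\RL$ via the assumed bias condition. The only cosmetic difference is that the paper carries out the linearization by explicit algebraic manipulation (adding and subtracting, then expanding $b_T^{1/\alpha_0-1/\hat\alpha_n}-1$), whereas you phrase it as a delta-method/Taylor expansion of $\varphi(\alpha,s)=s\,b_T^{1/\alpha_0-1/\alpha}$; the resulting gradient $\beta(T,\alpha_0)$ and the handling of the $o_\Prob(1)$ remainders are identical.
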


The same result holds true for the disjoint blocks estimator, but with $\bm \Sigma(\alpha_0)$ replaced by $I_{(\alpha_0,1)}^{-1}$, the inverse of the Fisher information matrix for the Fr\'echet family. In Table~\ref{tab:var1}, the asymptotic variances are given for various values of $T$ and with $\alpha_0=1$.

\begin{table}[t!]
\centering
\begin{tabular}{rrrrrrrr}
  \hline
  \hline
 T & 50 & 100 & 200 & 500 & 1000 & 5000 & 10000 \\ 
  \hline
Sliding Blocks & 11.01 & 14.40 & 18.26 & 24.07 & 29.02 & 42.35 & 48.87 \\ 
  Disjoint Blocks & 12.37 & 16.34 & 20.88 & 27.77 & 33.66 & 49.59 & 57.41 \\ 
  Ratio & 0.89 & 0.88 & 0.87 & 0.87 & 0.86 & 0.85 & 0.85 \\ 
   \hline
   \hline
\end{tabular}

\medskip
\caption{\label{tab:var1} Asymptotic variances for the sliding and disjoint blocks versions of $\widehat \RL_n(T, r_n)$, alongside with their ratio, for $\alpha_0=1$.}
\end{table}

\subsection{Case study}

We consider daily log-returns on the S\&P500 index (data downloaded from Yahoo Finance) in the fifty-year period from 1967 to 2016, yielding $n=12\,584$ observations in total. For such a long period, the log-returns can hardly be modelled by a stationary time series, and therefore we consider ten-year periods instead, yielding approximately $n = 2\,500$ daily observations per period. We consider a block size equal to $r = 62$, which corresponds to approximately one quarter, yielding $m = 40$ disjoint quarters in a ten-year period. In light of the results in \cite{Mcn98} and \cite{Lon00}, such a block size guarantees that the block maxima are approximately Fr\'echet distributed, while at the same time the effective sample size of $m\approx 40$ is large enough to guarantee a reasonably small variance of the estimators. We estimate the $1-1/T$ quantile of the distribution of the quarterly maximum using the sliding block maxima and we check whether this level is exceeded by the quarterly maximum immediately following the ten-year training sample. We then let the ten-year window roll through the whole fifty-year period in steps of one quarter, giving $40 \times 4  = 160$ estimated return levels in total, of which, on average, $160/T$ should be exceeded. We consider $T = 20, 40, 80$, for which one would thus expect $8, 4, 2$ exceedances, respectively. Doing the calculations separately for the positive and negative log-returns, we find $7, 3, 1$ exceedances for the wins and $10,7,1$ exceedances for the losses. 

The results are shown in Figure~\ref{fig:SP500}. The black lines depict the quarters following each ten-year training period, from 1977:Q1 up to 2016:Q4. The estimated return levels are represented by colored lines; for instance, the value of the red line at 1977:Q1 corresponds to the estimated $1-1/20$ quantile of the cdf of the quarterly block maxima, estimated from the data in the period 1967--1976. The dots correspond to exceedances of the estimated return levels.

In Figure~\ref{fig:SP500-Shape-Scale}, we show estimated values and $95\%$ confidence intervals (based on the normal approximation and ignoring the bias) for the Fr\'echet shape and scale parameters. The black lines are the same as in Figure~\ref{fig:SP500}, except for an affine transformation. The impact of the occurrence of large block maxima is clearly visible (decrease in $\alpha$, increase in $\sigma$).

\begin{figure}[t!]
\begin{center}
\centerline{\includegraphics[width=1\textwidth]{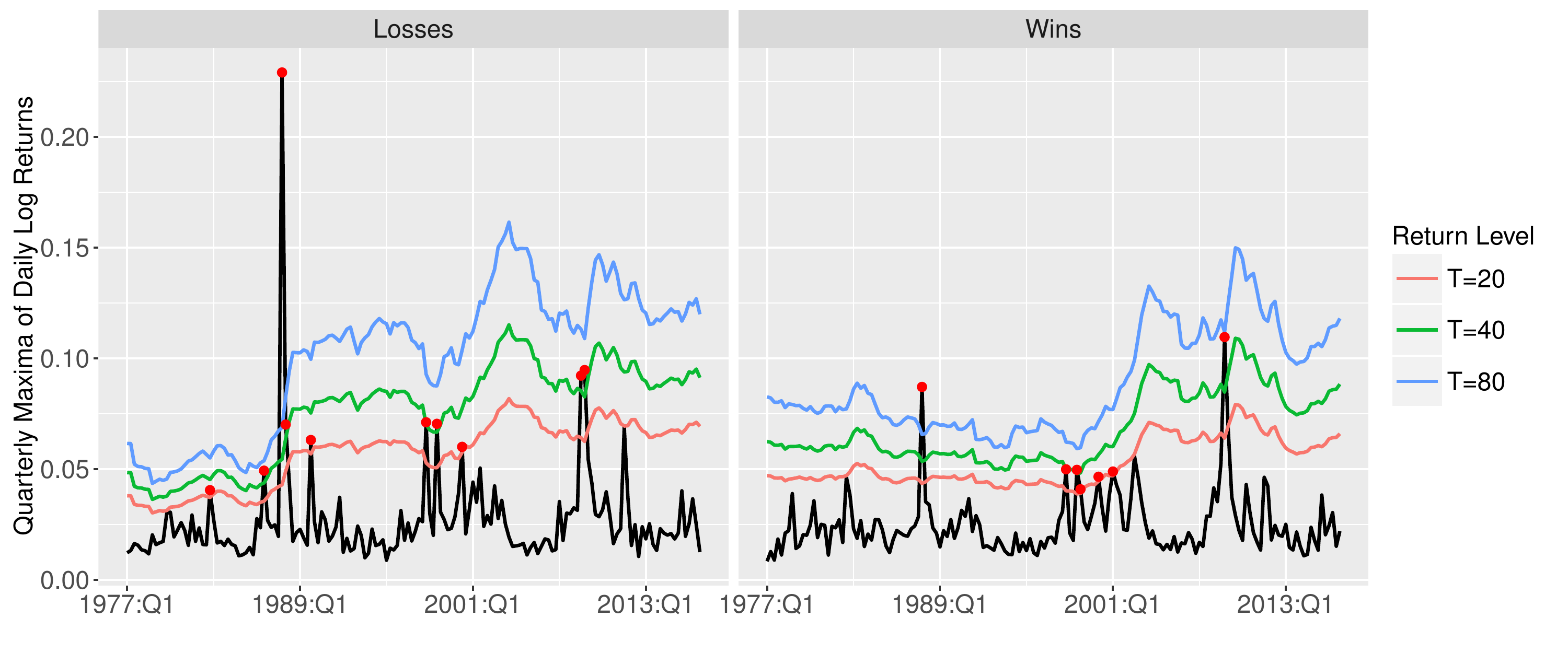}}
\vspace{-.5cm}
\caption{\label{fig:SP500} Quarterly block maxima of the negative (left) and positive (right) log-returns of the S\&P500 index (black lines) together with estimates of the $1-1/T$ quantile of the block maximum distribution (coloured lines). The estimates for a given quarter are based on sliding block maxima in the ten-year period immediately preceding that quarter.}
\end{center}
\end{figure}

\begin{figure}[t!]
\begin{center}
\centerline{\includegraphics[width=1\textwidth]{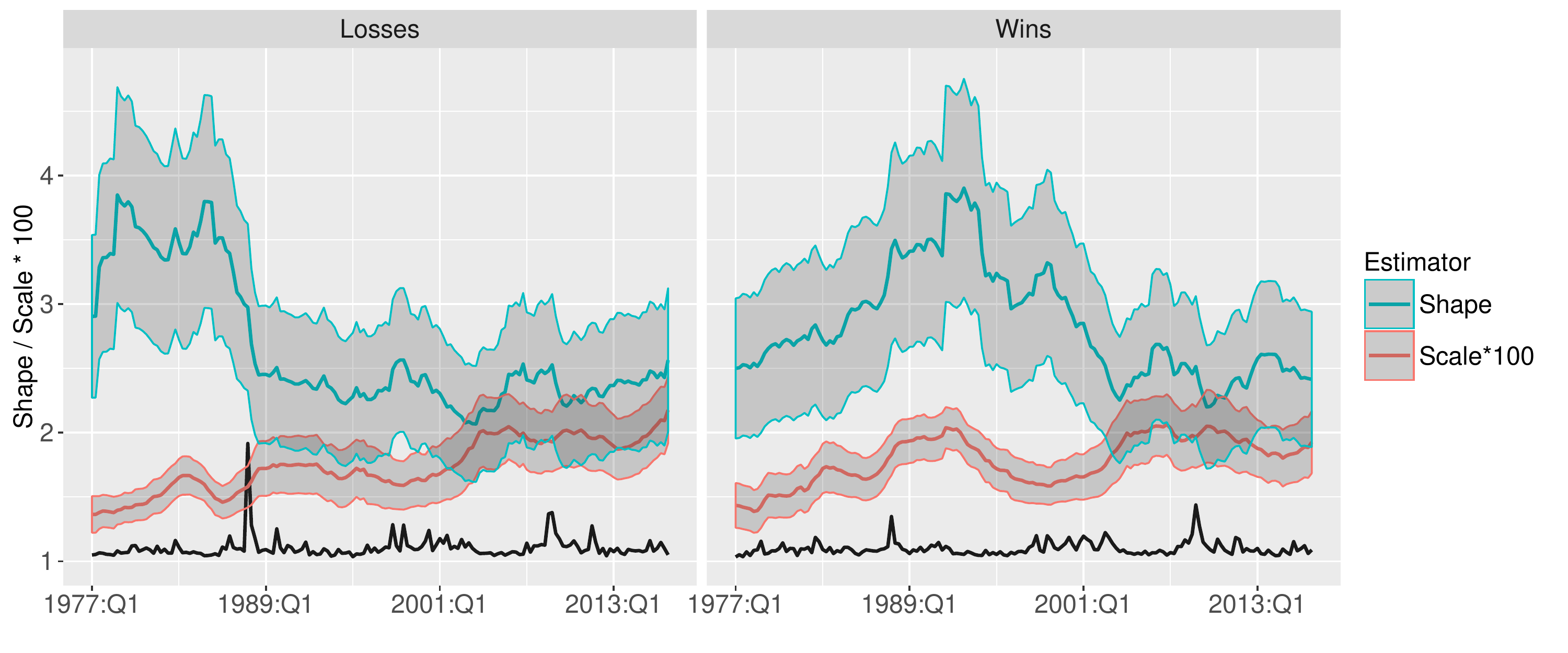}}
\vspace{-.5cm}
\caption{\label{fig:SP500-Shape-Scale} Estimated Fr\'echet shape (blue) and scale (red) parameters with pointwise $95\%$ confidence intervals of the distribution of the quarterly block maxima of the negative (left) and positive (right) log-returns of the S\&P500 index. The estimates for a given quarter are based on the sliding block maxima in the ten-year period immediately preceding that quarter and are depicted together with the actually observed block maximum (affinely transformed, black line).}
\end{center}
\end{figure}

\section{Simulation Study}
\label{sec:simul}

We compare the performance of the disjoint and sliding blocks variations of the maximum likelihood estimator of the Fr\'echet shape parameter $\alpha$. We also compare the results with the Hill estimator \citep{Hil75}, which is the maximum likelihood estimator for the one-parameter Pareto distribution given by $\Pr(Y > y) = (y \vee 1)^{-\alpha}$ fitted to the relative excesses over a high threshold. To put the estimators on equal footing, we set the threshold equal to the $m+1$ largest order statistic, of which there are $m$ excesses, where $m = \ip{n/r}$ is the number of disjoint blocks of size $r$ and which acts as an effective sample size.

We consider two different dependence scenarios: iid sequences and a max-autoregressive (ARMAX) model
\[
  X_t = \max \{ \beta X_{t-1}, (1-\beta) Z_t \}, \qquad t \in \Z,
\]
with parameter $\beta \in [0, 1]$ 
and where $(Z_t)_t$ are nonnegative iid random variables whose distribution is in the max-domain of attraction of the Fr\'echet distribution with shape parameter $\alpha > 0$. The process $(X_t)_t$ is strictly stationary and admits the causal representation $X_t = \max_{j \ge 0} \{ \beta^j (1-\beta) Z_{t-j} \}$. Since $\Prob(M_{1:r} \le x) = \Prob(X_1 \le x) \, \Prob \{ Z_1 \le x/(1-\beta) \}^{r-1}$, rescaled block maxima of $X_t$ converge weakly to the same Fr\'echet distribution, but with scaling sequence depending on $\beta$. For the simulations, we fix $\beta=1/2$, and use a burn-in period of length $200$ to arrive at approximate stationarity.

We consider three different choices for the iid case and for the innovation distribution of the max-autoregressive model: the Fr\'echet distribution itself, the Pareto distribution with shape parameter $\alpha$, and the absolute value of the Student \emph{t}-distribution with $\alpha$ degrees of freedom. 

The mean squared error, squared bias and variance  for the estimators of $\alpha_0$ are shown in Figure~\ref{fig:MSE-shape}. We consider a fixed sample size $n=1\,000$, block sizes $r=2,3, \dots, 50$ for the blocks estimators (i.e., $m=n/r$ ranging from $20$ to 500) and numbers of upper order statistics $m=20,30, \dots, 500$ for the Hill estimator. The results are based on $3\,000$ repetitions.

The bias-variance trade-off is clearly visible: small $m$ (large $r$) yields a large variance but a small bias, while increasing $m$ (decreasing $r$) decreases the variance but potentially increases the bias (with some exceptions for the Hill estimator in the ARMAX-model). The appearance of the bias curves in the iid scenarios may be explained as follows: The Hill estimator is the maximum likelihood estimator in the iid Pareto model, while the blocks estimators are maximum (quasi-)likelihood estimators in the iid Fr\'echet models. The absolute $t$-distribution is in between. The size of the bias is ordered accordingly. 

In all cases, the sliding blocks estimator is more accurate than the disjoint blocks estimator due to its smaller variance.

Finally, another advantage of the sliding blocks estimator is that its trajectories fluctuate less as a function of $r$, as illustrated in Figure~\ref{fig:Hill4}.

\begin{figure}[t!]
\begin{center}
\centerline{\includegraphics[width=\textwidth]{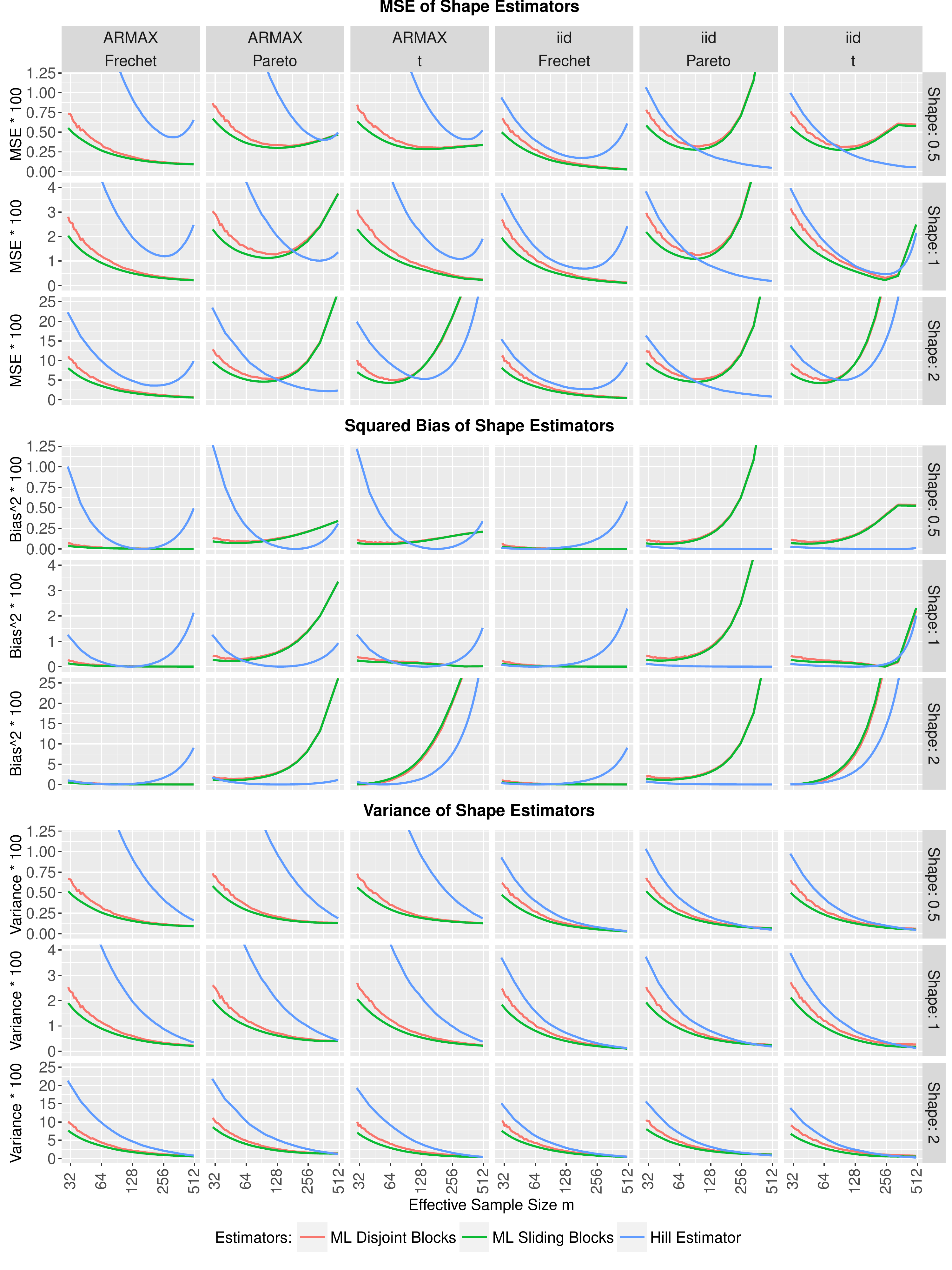}}
\vspace{-.5cm} 
\caption{\label{fig:MSE-shape} 
MSE, Squared Bias and Variance for the estimation of $\alpha_0$ (multiplied by 100) as a function of the effective sample size $m$, where $m=n/r$ for the blocks estimators and $m$ is equal to the number of upper order statistics used for the Hill estimator.
}
\end{center}
\vspace{-2cm}  
\end{figure}

\begin{figure}[t!]
\begin{center}
\vspace{-.1cm}
\centerline{\includegraphics[width=1.07\textwidth]{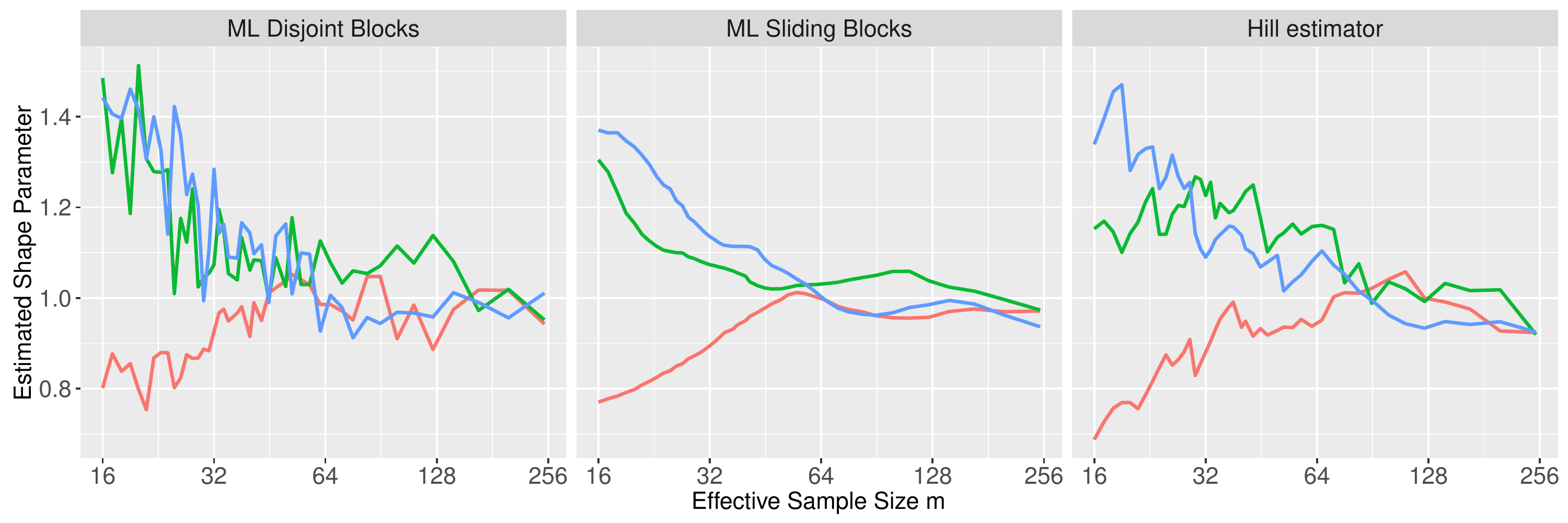}}
\vspace{-.3cm} 
\caption{\label{fig:Hill4} Three typical trajectories of three different estimators of the Fr\'echet shape parameter, based on iid samples of size $n=1\,000$ from the $t$-distribution with 1 degree of freedom (i.e., $\alpha_0=1$). The blocks estimators are based on the 44 unique values $r$ that are obtained by calculating the integer part of $n/m_0$ with $m_0$ ranging from $16, 17, \dots$ up to $250$. The Hill estimator is based on the respective 44 values $m=\ip{n/r}$. 
}
\vspace{-.5cm}
\end{center}
\end{figure}

\section{Proofs and auxiliary results}
\label{sec:proofs}

The proof of Theorem~\ref{theo:weak} is based on a sequence of auxiliary lemmas. 
Let $Z_{r_n,t}= (M_{r_n,t} \vee c) / \sigma_{r_n}$, with $c$ specified in the subsequent lemmas.
All convergences will be for $n\to\infty$, if not stated otherwise. Throughout the proofs, we will write $r=r_n$, $m=m_n$ etc. 

\begin{lemma}[Joint weak convergence of sliding block maxima] \label{lem:joint}
Suppose that Condition~\ref{cond:max} is met and that there exists an integer sequence $(\ell_n)_n$ such that $\ell_n=o(r_n)$ and $\alpha(\ell_n) = o(\ell_n/r_n)$ as $n\to\infty$. Then, for any $c\ge0$ and any $\xi \ge 0$, we have, for $x > 0$ and $y > 0$ and as $n \to \infty$,
\begin{multline}
\label{eq:Galphaxi}
  \lim_{r \to \infty}
  \Prob( Z_{r_n,1} \le x, Z_{r_n, 1 + \ip{r_n \xi}} \le y ) \\
  = G_{\alpha_0, \xi}(x, y) =
  \begin{cases}
    \exp \left\{
      - \xi x^{-\alpha_0} 
      - (1-\xi) (x \wedge y)^{-\alpha_0} 
      - \xi y^{-\alpha_0}
    \right\}, &
    \text{if $0 \le \xi \le 1$,} \\[1ex]
    \exp( - x^{-\alpha_0} - y^{-\alpha_0} ), &
    \text{if $\xi \ge 1$.}
  \end{cases}
\end{multline}
\end{lemma}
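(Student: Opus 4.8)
\emph{Proof proposal.} The plan is to reduce the statement to one about the untruncated block maxima and then to run a standard small-blocks/big-blocks decoupling argument, keeping careful track of the overlap. Since $\sigma_{r_n}\to\infty$ while $x,y>0$ are fixed, for all large $n$ the truncation at $c$ is inactive on the relevant events, i.e.\ $\{Z_{r_n,1}\le x\}=\{M_{r_n,1}\le x\sigma_{r_n}\}$ and similarly for the second coordinate; hence it suffices to analyse $\Prob(M_{r,1}\le x\sigma_r,\ M_{r,1+\ip{r\xi}}\le y\sigma_r)$, where from now on $r=r_n$, $\ell=\ell_n$, $a=\ip{r\xi}$. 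The degenerate case $\xi=0$ is immediate: then $1+\ip{r\xi}=1$, the two windows coincide, and the probability equals $\Prob(M_{r,1}\le(x\wedge y)\sigma_r)\to\exp\{-(x\wedge y)^{-\alpha_0}\}=G_{\alpha_0,0}(x,y)$ by Condition~\ref{cond:max}.

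For the main case $0<\xi<1$, I would split each sliding window along the overlap. Put $u_1=x\sigma_r$, $u_2=(x\wedge y)\sigma_r$, $u_3=y\sigma_r$ and $A_1=\max(X_1,\dots,X_a)$, $A_2=\max(X_{a+1},\dots,X_r)$, $A_3=\max(X_{r+1},\dots,X_{r+a})$, so that $A_2$ is exactly the block shared by the two windows and the joint event equals $\{A_1\le u_1,\ A_2\le u_2,\ A_3\le u_3\}$. Then: (i) shorten $A_1$ and $A_2$ by $\ell$ indices each to create two gaps of length $\ell+1$ and apply the $\alpha$-mixing inequality twice (first separating the shortened $A_1$ from $(A_2',A_3)$, then $A_2'$ from $A_3$) to factor the probability as $\Prob(A_1'\le u_1)\,\Prob(A_2'\le u_2)\,\Prob(A_3\le u_3)$ up to an error $2\alpha(\ell)=o(1)$; (ii) undo the shortening at the cost of a further $\Prob(M_{1:\ell}>u_1)+\Prob(M_{1:\ell}>u_2)$, which is $o(1)$ since $\sigma_r\to\infty$ and $\ell=o(r)$; (iii) evaluate the three marginal limits using $M_{1:k_n}/\sigma_{k_n}\dto P_{\alpha_0,1}$ for any $k_n\to\infty$ (Condition~\ref{cond:max} and stationarity) together with $\sigma_a/\sigma_r\to\xi^{1/\alpha_0}$ and $\sigma_{r-a}/\sigma_r\to(1-\xi)^{1/\alpha_0}$ (regular variation of $(\sigma_r)_r$). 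These give $\Prob(A_1\le u_1)\to\exp\{-\xi x^{-\alpha_0}\}$, $\Prob(A_3\le u_3)\to\exp\{-\xi y^{-\alpha_0}\}$ (stationarity for $A_3$), $\Prob(A_2\le u_2)\to\exp\{-(1-\xi)(x\wedge y)^{-\alpha_0}\}$, and multiplying yields $G_{\alpha_0,\xi}(x,y)$.

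For $\xi\ge 1$, the windows $\{1,\dots,r\}$ and $\{1+a,\dots,r+a\}$ are disjoint, so a single decoupling step suffices: shorten the first window to $\{1,\dots,r-\ell\}$, apply the $\alpha$-mixing bound (gap $\ge\ell+1$) to factor the probability up to $\alpha(\ell)=o(1)$, undo the shortening up to $\Prob(M_{1:\ell}>x\sigma_r)=o(1)$, and use Condition~\ref{cond:max} with $\sigma_{r-\ell}/\sigma_r\to1$ and stationarity to obtain $\Prob(M_{r,1}\le x\sigma_r)\to\exp\{-x^{-\alpha_0}\}$ and $\Prob(M_{r,1+a}\le y\sigma_r)\to\exp\{-y^{-\alpha_0}\}$; the product is $\exp\{-x^{-\alpha_0}-y^{-\alpha_0}\}=G_{\alpha_0,\xi}(x,y)$, consistent with the $\xi<1$ formula as $\xi\uparrow1$.

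I expect the main obstacle to be not any single deep step but the bookkeeping in the decoupling: the cuts must be placed so that the surviving pieces are separated by at least $\ell$ indices (to license the $\alpha$-mixing inequality) while the total removed length, of order $\ell=o(r)$, remains negligible for both the joint probability and each marginal. A secondary point that needs care is that all thresholds are moving ($x\sigma_r$, etc.), so the convergence of the marginal probabilities must be justified via weak convergence at continuity points of the continuous Fréchet limit, combined with the regular variation of $(\sigma_r)_r$ to identify the limiting scaling ratios $\xi^{1/\alpha_0}$ and $(1-\xi)^{1/\alpha_0}$; the degenerate sub-blocks arising at the boundary values $\xi\in\{0,1\}$ are then absorbed by the usual conventions.
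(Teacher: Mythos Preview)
Your proposal is correct and follows essentially the same route as the paper: remove the truncation, split the two windows along the overlap into three consecutive sub-blocks, shave off $\ell$ observations to create gaps, factor via $\alpha$-mixing, and identify the three marginal limits through regular variation of $(\sigma_r)_r$. The only notable difference is in how the shortening is undone: you bound the discrepancy directly by $\Prob(M_{1:\ell}>x\sigma_r)\to 0$ (which follows since $\ell=o(r)$ and $\sigma_{\ip{\eps r}}/\sigma_r\to\eps^{1/\alpha_0}$ for every $\eps>0$), whereas the paper invokes Lemma~A.8 of \cite{BucSeg18}, which asserts that $\Prob(M_{1:(r'-\ell)}<M_{(r'-\ell+1):r'})\to 0$, i.e.\ the maximum over the full block equals the maximum over the shortened block with high probability. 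Both devices work and cost the same; yours is slightly more self-contained, while the paper's reuses a lemma already in hand from the companion article.
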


Surprisingly, the joint limiting law does not depend on any quantities related to the serial dependence of the time series (like the extremal index). The two margins of the limit distribution are always $P_{\alpha_0, 1}$. For $\xi \ge 1$, the two block maxima are asymptotically independent. For $0 \le \xi \le 1$, the limiting distribution $G_{\alpha_0, \xi}$ is bivariate max-stable with $P_{\alpha_0, 1}$ margins and with Pickands dependence function \citep{Pic81} equal to
\begin{equation}
\label{eq:MO:A}
  A_\xi(w) 
  = \xi + (1-\xi)(w \wedge (1-w)), \qquad w \in [0, 1].
\end{equation}
If $(Z_1, Z_2) \sim G_{\alpha_0, \xi}$, then $(Z_1^{-\alpha_0}, Z_2^{-\alpha_0})$ is a pair of unit exponential random variables with joint survival function
\begin{align}
\nonumber
  \Prob( Z_1^{-\alpha_0} \ge s, Z_2^{-\alpha_0} \ge t )
  &= \exp \{ - \xi s - (1-\xi) (s \wedge t) - \xi t \} \\
\label{eq:MO}
  &= \exp \{ - (s+t) \, A_{\xi}( s/(s+t) ) \}.
\end{align}
This is the Marshall--Olkin distribution \citep{MarOlk67} with dependence parameter $\xi$. The distribution depends on $\xi$ in such a way that the dependence increases as $\xi$ decreases, i.e., as the overlap between the two blocks increases.

\begin{proof}[Proof of Lemma~\ref{lem:joint}]
Write $r=r_n$, $\ell=\ell_n$ and $\alpha=\alpha_0$.
Since $c/\sigma_r\to 0$ as $r\to\infty$, we may redefine $Z_{r,i}=M_{r,i}/\sigma_r$.
By Condition~\ref{cond:max},
\[
  \lim_{r \to \infty} \sigma_{\ip{r\xi}}/\sigma_r 
   = \xi^{1/\alpha}
\]
for any $\xi > 0$. As a consequence, 
\[
  \Prob(M_{1:\ip{r\xi}} \le \sigma_r x) 
  = \Prob \bigl\{ M_{1:\ip{r\xi}} \le  \sigma_{\ip{r\xi}} (\tfrac{\sigma_r}{\sigma_{\ip{r\xi}}} x ) \bigr\}
  \to \exp(-\xi x^{-\alpha}), \qquad r\to \infty.
\]
Consider the case $\xi \in (0,1)$.
We will show below that
\begin{align} \label{eq:jointz}
&\ \Prob( Z_{r,1} \le x, Z_{r,1+\ip{r\xi}} \le y)  \nonumber \\
=&\ 
\Prob \bigl\{ M_{1:\ip{r\xi}} \le \sigma_r x, \, M_{(\ip{r\xi}+1):r} \le  \sigma_r(x \wedge y), \, M_{(r+1):(r+\ip{r\xi})} \le \sigma_ry \bigr\} \nonumber \\
=&\
\Prob \bigl\{ M_{1:\ip{r\xi}} \le \sigma_rx \bigr\} \,
\Prob \bigl\{ M_{(\ip{r\xi}+1):r} \le \sigma_r(x \wedge y) \bigr\} \,
\Prob \bigl\{ M_{(r+1):(r+\ip{r\xi})} \le \sigma_ry \bigr\} + o(1)
\end{align}
as $r\to \infty$. As a consequence of the previous two displays and by strict stationarity, we obtain \eqref{eq:Galphaxi} for $\xi \in (0, 1)$. The case $\xi \ge 1$ can be treated similarly while the case $\xi = 0$ is trivial. It remains to show \eqref{eq:jointz}.

As a consequence of Lemma~A.8 in \cite{BucSeg18}, we have
\[
\lim_{n\to\infty} \Prob(M_{1:(r'-\ell)} < M_{(r'-\ell+1):r'}) =0
\]
for any sequence $r'$ such that $r'/r$ is bounded away from 0 and infinity; note that Condition~3.1 in that paper follows from our assumption that $(\sigma_r)_r$ is regularly varying.
Applying this result four times and using the fact that $\lim_{\ell\to\infty}\alpha(\ell)=0$, we may write the expression in the middle line of \eqref{eq:jointz} as
\begin{align*} 
&\
\Prob \{ 
  M_{1:(\ip{r\xi}-\ell)} \le \sigma_r x, \, 
  M_{(\ip{r\xi}+1):(r-\ell)} \le  \sigma_r(x \wedge y), \, 
  M_{(r+1):(r+\ip{r\xi})} \le \sigma_ry
\}  + o(1) \nonumber \\
=&\ 
\Prob \{ M_{1:(\ip{r\xi}-\ell)} \le \sigma_r x \} \, 
\Prob \{ M_{(\ip{r\xi}+1):(r-\ell)} \le  \sigma_r(x \wedge y) \} \,
\Prob \{ M_{(r+1):(r+\ip{r\xi})} \le \sigma_ry \} + o(1)  \\
=&\ 
\Prob \{ M_{1:\ip{r\xi}} \le \sigma_r x \} \,
\Prob \{ M_{(\ip{r\xi}+1):r} \le  \sigma_r(x \wedge y) \} \,
\Prob \{ M_{(r+1):(r+\ip{r\xi})} \le \sigma_ry \} + o(1) 
\end{align*}
as $n\to\infty$, which proves \eqref{eq:jointz}.
\end{proof}

\begin{lemma}[Asymptotic covariances of functions of sliding block maxima]
\label{lem:mom1}
Suppose Conditions~\ref{cond:max} and \ref{cond:moment} are met and that there exists an integer sequence $(\ell_n)_n$ such that $\ell_n=o(r_n)$ and $\alpha(\ell_n) = o(\ell_n/r_n)$ as $n\to\infty$. Then, for any $c>0$, $\xi \in [0,1]$ and any pair of measurable functions  $f, g$ on $(0,\infty)$ which are continuous almost everywhere and satisfy
\[
(|f| \vee |g|)^2 \le g_{\eta,\alpha_0}(x) = \{ x^{-\alpha_0}\ind(x\le e) + \log(x) \ind(x> e) \}^{2+\eta}
\]
for some $0<\eta<\nu$, we have
\[
  \lim_{n\to\infty} 
  \Cov \bigl( f(Z_{r_n,1}), \, g(Z_{r_n, 1+\ip{r_n\xi}}) \bigr)
  =
  \Cov_{\alpha_0, \xi} \bigl( f(Z_1), g(Z_2) \bigr)
\]
where the right-hand side means that $(Z_1, Z_2) \sim G_{\alpha_0, \xi}$ as in \eqref{eq:Galphaxi}.
\end{lemma}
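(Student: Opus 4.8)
The plan is to write the covariance as a joint expectation minus a product of marginal ones and to prove convergence of each piece by combining the joint weak convergence of Lemma~\ref{lem:joint} with a uniform integrability argument. Setting $Y_n = f(Z_{r_n,1}) \, g(Z_{r_n,1+\ip{r_n\xi}})$, one has
\[
  \Cov\bigl( f(Z_{r_n,1}), g(Z_{r_n,1+\ip{r_n\xi}}) \bigr)
  = \Exp[Y_n] - \Exp[f(Z_{r_n,1})] \, \Exp[g(Z_{r_n,1})],
\]
where the second factor has been rewritten using strict stationarity. Since both margins of $G_{\alpha_0,\xi}$ equal $P = P_{\alpha_0,1}$, the target value is $\Exp[f(Z_1)g(Z_2)] - Pf\cdot Pg = \Cov_{\alpha_0,\xi}(f(Z_1),g(Z_2))$ for $(Z_1,Z_2)\sim G_{\alpha_0,\xi}$, so it suffices to prove $\Exp[Y_n]\to\Exp[f(Z_1)g(Z_2)]$ together with $\Exp[f(Z_{r_n,1})]\to Pf$ and $\Exp[g(Z_{r_n,1})]\to Pg$.

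First I would establish the relevant weak convergences. Lemma~\ref{lem:joint}, which applies under the present assumptions, gives $(Z_{r_n,1},Z_{r_n,1+\ip{r_n\xi}})\dto(Z_1,Z_2)\sim G_{\alpha_0,\xi}$. The map $(x,y)\mapsto f(x)g(y)$ is continuous at every point $(x,y)$ at which $f$ is continuous at $x$ and $g$ is continuous at $y$, so its discontinuity set is contained in $(D_f\times(0,\infty))\cup((0,\infty)\times D_g)$, where $D_f$ and $D_g$ are the Lebesgue-null discontinuity sets of $f$ and $g$. Since the coordinate projections of $G_{\alpha_0,\xi}$ both equal the absolutely continuous law $P$ --- notwithstanding the singular diagonal component of the Marshall--Olkin distribution --- this set is $G_{\alpha_0,\xi}$-null, so the continuous mapping theorem yields $Y_n\dto f(Z_1)g(Z_2)$. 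The same reasoning applied to the single-coordinate maps gives $f(Z_{r_n,1})\dto f(Z_1)$ and $g(Z_{r_n,1})\dto g(Z_1)$.

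Next I would upgrade these to convergence of expectations by proving uniform integrability. Invoking the remark following Condition~\ref{cond:moment}, inequality~\eqref{eq:moment} continues to hold with $M_{r,1}\vee 1$ replaced by $M_{r,1}\vee c$, that is, $\sup_n\Exp[g_{\nu,\alpha_0}(Z_{r_n,1})]<\infty$. Because $g_{\eta,\alpha_0}(x)^{(2+\nu)/(2+\eta)} = g_{\nu,\alpha_0}(x)$ and $p:=(2+\nu)/(2+\eta)>1$, the family $\{g_{\eta,\alpha_0}(Z_{r_n,1})\}_n$ is bounded in $L^p$, hence uniformly integrable, and by stationarity so is $\{g_{\eta,\alpha_0}(Z_{r_n,1+\ip{r_n\xi}})\}_n$. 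From $(|f|\vee|g|)^2\le g_{\eta,\alpha_0}$ and $|f(x)g(y)|\le\tfrac12 f(x)^2+\tfrac12 g(y)^2$ it follows that $\{|Y_n|\}_n$ is dominated by the uniformly integrable family $\{\tfrac12 g_{\eta,\alpha_0}(Z_{r_n,1})+\tfrac12 g_{\eta,\alpha_0}(Z_{r_n,1+\ip{r_n\xi}})\}_n$, hence $\{Y_n\}_n$ is uniformly integrable; likewise $|f(Z_{r_n,1})|\le\sqrt{g_{\eta,\alpha_0}(Z_{r_n,1})}$ and $|g(Z_{r_n,1})|\le\sqrt{g_{\eta,\alpha_0}(Z_{r_n,1})}$ are bounded in $L^{2p}$ with $2p>2$. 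The standard fact that weak convergence together with uniform integrability implies convergence of means (via the Skorokhod representation and the generalized dominated convergence theorem) then delivers the three limits, and assembling them proves the lemma.

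The hard part will be the uniform integrability step, and within it the passage from the moment control of Condition~\ref{cond:moment} --- stated for truncation at $1$ and exponent $2+\nu$ --- to truncation at an arbitrary $c>0$ and the smaller exponent $2+\eta$; once this is settled, the exponent bookkeeping, the AM--GM bound, and the continuous mapping argument are all routine. A secondary point to check is that the singular diagonal component of $G_{\alpha_0,\xi}$ does not interfere with the $G_{\alpha_0,\xi}$-negligibility of the discontinuity set of $(x,y)\mapsto f(x)g(y)$, which is handled by the absolute continuity of the marginals noted above.
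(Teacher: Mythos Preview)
Your proposal is correct and follows essentially the same route as the paper: joint weak convergence from Lemma~\ref{lem:joint}, the continuous mapping theorem, and then uniform integrability to upgrade to convergence of moments (the paper cites Example~2.21 in \cite{Van98} for this last step). The only cosmetic difference is that the paper invokes the Cauchy--Schwarz inequality to control the product, whereas you use the AM--GM bound $|f(x)g(y)|\le\tfrac12 f(x)^2+\tfrac12 g(y)^2$; both lead to the same $L^p$-boundedness with $p=(2+\nu)/(2+\eta)>1$.
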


\begin{proof}[Proof of Lemma~\ref{lem:mom1}] 
The result is a simple consequence of Lemma~\ref{lem:joint}, the Cauchy--Schwarz inequality and Example~2.21 in \cite{Van98}.
\end{proof}

\begin{lemma}\label{lem:cov}
Suppose Conditions~\ref{cond:max}, \ref{cond:alpha} and \ref{cond:moment} are met. Then, for any pair of measurable functions  $f,g$ which are continuous almost everywhere and satisfy
\[
(|f| \vee |g|)^2 \le g_{\eta,\alpha_0}(x) = \{ x^{-\alpha_0}\ind(x\le \mathrm{e}) + \log(x) \ind(x > \mathrm{e}) \}^{2+\eta}
\]
for some $0<\eta<\nu$, we have
\[
  \sigma_{f,g}=\lim_{n \to \infty} \Cov( \GG_n f, \GG_n g)
  =
  2 \int_0^1 
    \Cov_{\alpha_0,\xi} \bigl( f(Z_1), g(Z_2) \bigr) \, 
  \diff \xi.
\]
\end{lemma}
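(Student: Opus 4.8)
The plan is to expand the covariance $\Cov(\GG_n f, \GG_n g)$ as a double sum over pairs of sliding blocks, group the terms according to the ``lag'' between the two blocks, pass to the limit term-by-term using Lemma~\ref{lem:mom1}, and recognize the resulting Riemann-type sum as the integral $2\int_0^1 \Cov_{\alpha_0,\xi}(f(Z_1),g(Z_2))\diff\xi$. Writing $k = k_n = n - r + 1$ and $\tilde f_t = f(Z_{r,t}/\sigma_r) - Pf$ (and similarly $\tilde g_t$), stationarity gives
\[
  \Cov(\GG_n f, \GG_n g)
  = \frac{n/r}{k^2} \sum_{s=1}^{k}\sum_{t=1}^{k} \Cov(\tilde f_s, \tilde g_t)
  = \frac{n/r}{k^2} \sum_{|h| < k} (k - |h|)\, c_n(h),
\]
where $c_n(h) = \Cov\bigl(f(Z_{r,1}), g(Z_{r,1+|h|})\bigr)$ for $h \ge 0$ and the analogous quantity with $f,g$ swapped for $h<0$. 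Since $n/r \sim k/r$ and $(k-|h|)/k \to 1$ for each fixed scaled lag, the prefactor $(n/r)(k-|h|)/k^2$ behaves like $1/r$, so the expression is asymptotically $\tfrac1r \sum_{|h|<k} c_n(h)$, i.e.\ a Riemann sum over $\xi = h/r \in (-1,1)$ for a function that should converge to $\Cov_{\alpha_0,\xi}(f(Z_1),g(Z_2))$; the symmetry in $\pm h$ (via swapping $f$ and $g$, which leaves the covariance invariant) produces the factor $2$ and restricts the integral to $[0,1]$.

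The main work is to justify the interchange of limit and sum, which I would split according to whether the lag is small, moderate, or large relative to $r$. For a fixed integer multiple $\xi = h/r \in [0,1]$, Lemma~\ref{lem:mom1} (applied with the sequence $\ell_n$ furnished by Condition~\ref{cond:alpha}, noting $\alpha(r_n) \to 0$ forces such an $\ell_n$ to exist) gives $c_n(\lfloor r\xi \rfloor) \to \Cov_{\alpha_0,\xi}(f(Z_1),g(Z_2))$. For $|h| \ge r$ the two blocks are disjoint, separated by a gap of at least $|h| - r$, and a mixing-plus-moment bound (Davydov's inequality, using $\alpha$-mixing from Condition~\ref{cond:alpha} and the $(2+\nu)$-moment bound from Condition~\ref{cond:moment}, exactly as in the disjoint-blocks analysis of \cite{BucSeg18}) shows $\sum_{|h| \ge r} |c_n(h)| = o(r)$, so these terms contribute nothing in the limit. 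The delicate range is $|h|$ of order $r$ but not negligible compared to it: here one needs a uniform integrability / domination argument showing $\sup_n |c_n(h)| \le \varphi(h/r)$ for some integrable $\varphi$ on $[0,1]$, which follows from Cauchy--Schwarz, $|c_n(h)| \le \sqrt{\Var f(Z_{r,1})\,\Var g(Z_{r,1})}$, a uniform bound, together with the decay coming from the partial overlap (as $\xi \uparrow 1$ the blocks become asymptotically independent and $\Cov_{\alpha_0,\xi} \to 0$). With such a dominating function in hand, dominated convergence for sums converts the Riemann sum into the stated integral.

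The step I expect to be the main obstacle is the uniform control over the intermediate-lag regime: establishing a single integrable bound on $|c_n(h)|$ valid for all $n$ and all $h$ with $0 \le h < r$, rather than just the pointwise convergence that Lemma~\ref{lem:mom1} provides. This requires combining the uniform moment bound of Condition~\ref{cond:moment} with a quantitative version of the asymptotic-independence phenomenon from Lemma~\ref{lem:joint} — essentially showing the covariance decays at a controlled rate in the overlap fraction — and I would model the argument on the corresponding covariance computations in \cite{BucSeg18}, invoking the elementary extension of Condition~\ref{cond:moment} to arbitrary truncation constants $c>0$ noted just after that condition. Once this domination is secured, the remaining bookkeeping (handling the $(k-|h|)/k$ weights, the $n/r$ versus $k/r$ discrepancy, and the $o_\Prob$-free passage to the limit) is routine.
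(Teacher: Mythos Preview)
Your approach is essentially the same as the paper's: expand the covariance as a sum over lags, identify a Riemann sum in $\xi=h/r$, use Lemma~\ref{lem:mom1} for pointwise convergence, and control far-apart lags by a Davydov-type mixing inequality together with Condition~\ref{cond:moment}. The paper merely organizes the bookkeeping differently, first grouping the sliding blocks into $m_n$ disjoint batches $A_h=\sum_{s\in I_h}f(Z_{r,s})$ and $B_h=\sum_{s\in I_h}g(Z_{r,s})$, so that the main term is $r_n^{-2}\Cov(A_2,B_1+B_2+B_3)$ and the mixing bound is applied only to batch lags $\ge 3$ (i.e., gaps $\ge r_n$).

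The ``main obstacle'' you flag is a misdiagnosis. No decaying $\varphi$ is needed on $[0,1]$: by Cauchy--Schwarz and Condition~\ref{cond:moment} (with $\eta<\nu$), the uniform bound $\sup_n\lvert c_n(h)\rvert\le \sup_n\lVert f(Z_{r_n,1})\rVert_2\lVert g(Z_{r_n,1})\rVert_2<\infty$ already gives an integrable (constant) dominating function on the bounded interval $[0,1]$. This, together with the pointwise limit from Lemma~\ref{lem:mom1}, is all that dominated convergence requires. The paper uses exactly this constant domination. What does need a moment's thought is the range $r\le h<2r$, where the gap $h-r$ can be arbitrarily small and Davydov alone is not obviously useful; the cleanest fix---and the paper's---is to extend the dominated-convergence argument to $\xi\in[0,2]$ (still a bounded interval, same constant majorant) and use that $\Cov_{\alpha_0,\xi}(f(Z_1),g(Z_2))=0$ for $\xi\ge 1$ by Lemma~\ref{lem:joint}, so this range contributes nothing in the limit. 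Davydov is then reserved for gaps of size at least $r_n$, where Condition~\ref{cond:alpha} bites directly.
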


\begin{proof}[Proof of Lemma~\ref{lem:cov}] 
Let $\ell_n= \max\{s_n, \ip{r_n \sqrt{\alpha(s_n)}}\}$, where $s_n=\ip{\sqrt{r_n}}$.  Then $\ell_n\to \infty$, $\ell_n=o(r_n)$ and  $\alpha(\ell_n) = o(\ell_n/r_n)$ as $n\to\infty$, so that the results of Lemma~\ref{lem:joint}--\ref{lem:mom1} become available.

For $h=1, \dots, \ip{n/r_n}$, let $I_h = \{(h-1)r_n+1, \dots,h r_n \}$ denote the set of indices making up the $h$th disjoint block of size $r_n$. For simplicity assume that $m_n=n/r_n$ is an integer. Then, we may write
\[
\frac{1}{k_n} \sum_{t=1}^{k_n} f(Z_{r,t})= \frac{1}{k_n}\sum_{h=1}^{m_n} A_h, \qquad 
\frac{1}{k_n} \sum_{t=1}^{k_n} g(Z_{r,t})= \frac{1}{k_n}\sum_{h=1}^{m_n} B_h,
\]
where $A_h = \sum_{s\in I_h} f(Z_{r,s})$ and  $B_h = \sum_{s\in I_h} g(Z_{r,s})$.
As a consequence, 
\begin{align} \label{eq:covdec}
&\hspace{-.5cm}\Cov( \GG_n f, \GG_n g) \nonumber \\
&= \frac{m_n}{k_n^2} \Big( m_n \Cov(A_1, B_1) + \sum_{h=1}^{m_n-1} (m_n-h)\bigl\{ \Cov(A_1, B_{1+h}) +   \Cov(B_1, A_{1+h})\big \} \Big) \nonumber\\
&= \frac{m_n^2}{k_n^2} \Cov(A_2, B_1+ B_2+B_3)  - \frac{m_n}{k_n^2}\Cov(A_2, B_1+B_3)  \nonumber \\
& \hspace{3cm} + \frac{m_n^2}{k_n^2} \sum_{h=2}^{m_n-1} (1-\tfrac{h}{m_n}) \bigl\{ \Cov(A_1, B_{1+h}) +   \Cov(B_1, A_{1+h}) \big \}.
\end{align}
Let us proceed by showing that 
\begin{align} \label{eq:sn}
\lim_{n \to \infty}S_n = 2\int_0^1 \Cov_{{\alpha_0,\xi}} (f(Z_1), g(Z_2)) \, \diff \xi, \quad \text{ where } S_n =\frac{1}{r_n^2} \Cov(A_2, B_1+ B_2+B_3).
\end{align}
For that purpose, define functions $g_{n1}$ and $g_{n2}$ on the positive real line by
\[
g_{n1}(\xi) =  \Cov \bigl( f(Z_{r,1}), g(Z_{r,1+\ip{r \xi}}) \bigr), \qquad
g_{n2}(\xi) =  \Cov \bigl( f(Z_{r,1+\ip{r \xi}}), g(Z_{r,1}) \bigr).
\]
We may then write
\begin{align*}
\frac{1}{r_n^2} \Cov(A_2, B_2) 
&= 
\frac{1}{r_n^2} \sum_{s=1}^{r_n} \sum_{t=1}^{r_n}  \Cov \bigl( f(Z_{r,s}), g(Z_{r,t}) \bigr) \\
&= 
\frac{1}{r_n} g_{n1}(0) + \frac{1}{r_n} \sum_{h=1}^{r_n-1} \big(1-\tfrac{h}{r_n}\big) \Big\{ g_{n1}\big( \tfrac{h}{r_n}\big)  + g_{n2}\big(\tfrac{h}{r_n}\big)  \Big\}, \\
\frac{1}{r_n^2} \Cov(A_2, B_3) 
&= 
\frac{1}{r_n^2} \sum_{s=1}^{r_n} \sum_{t=r_n+1}^{2r_n}  \Cov \bigl( f(Z_{r,s}), g(Z_{r,t}) \bigr) \\
&= 
\frac{1}{r_n} \sum_{h=1}^{r_n-1} \tfrac{h}{r_n} g_{n1}\big( \tfrac{h}{r_n}\big)   + \frac{1}{r_n} \sum_{h=r_n}^{2r_n-1} (2-\tfrac{h}{r_n}) g_{n1}\big( \tfrac{h}{r_n}\big), \\
\frac{1}{r_n^2} \Cov(A_2, B_1) 
&= 
\frac{1}{r_n^2} \sum_{s=1}^{r_n} \sum_{t=r_n+1}^{2r_n}  \Cov \bigl( f(Z_{r,s}), g(Z_{r,t}) \bigr) \\
&= 
\frac{1}{r_n} \sum_{h=1}^{r_n-1} \tfrac{h}{r_n} g_{n2}\big( \tfrac{h}{r_n}\big)   + \frac{1}{r_n} \sum_{h=r_n}^{2r_n-1} (2-\tfrac{h}{r_n}) g_{n2}\big( \tfrac{h}{r_n}\big).
\end{align*}
As a consequence of the three previous formulas, 
\[
S_n = \int_0^1 \{ g_{n1}(\xi) + g_{n2}(\xi) \} \diff\xi + R_n 
\]
where the remainder $R_n$ satisfies
\[
|R_n| \le \frac{1}{r_n} \lvert g_{n1}(0) \rvert + 2 \int_1^2 \{ \lvert g_{n1}(\xi) \rvert + \lvert g_{n2}(\xi) \rvert \} \diff\xi.
\]
Note that $\lim_{n \to \infty } g_{n\ell}(\xi) = \Cov_{\alpha_0,\xi} (f(Z_1), g(Z_2))$ by Lemma~\ref{lem:mom1}, for $\ell=1,2$. In particular, the limit is zero for $\xi \ge 1$. 
Hence, we obtain both $R_n \to 0$ and \eqref{eq:sn} by dominated convergence.

Let us finally show that the sum on the right-hand side of \eqref{eq:covdec} is negligible. The lemma then follows from  \eqref{eq:covdec} and \eqref{eq:sn}. For that purpose, start by considering  the sum over those summands for which $h\ge 3$. 
In this case, the observations making up $A_1$ and $B_{1+h}$ are separated by  $r_n(h-2)$ observations. As a consequence, invoking Lemma~3.11 in~\cite{DehPhi02}, we have
\[
\Cov(A_1, B_{1+h}) \le 10 \, \lVert A_1 \rVert_{2+\nu} \, \lVert B_1 \rVert_{2+\nu} \, \{ \alpha(r_n(h-2)) \}^{\nu/(2+\nu)},
\]
with $\nu>0$ from Condition~\ref{cond:moment}.
Since $(m_n/k_n)^2 \lVert A_1\rVert_{2+\nu} \lVert B_1 \rVert_{2+\nu}=O(1)$, we can bound the sum involving $\Cov(A_1, B_{1+h})$ by a multiple of 
$
\sum_{h=1}^{m_n-3} \{\alpha(h r_n)\}^{\nu/(2+\nu)},
$
which converges to 0 by Condition~\ref{cond:alpha}.
The same argument can be used to handle the sum involving $\Cov(B_1, A_{1+h})$ for $h\ge 3$. It remains to consider the summand corresponding to $h=2$.  We may write
\begin{align*}
\frac{1}{r_n^2} \Cov(A_1, B_3) 
&= 
\frac{1}{r_n^2} \sum_{s=1}^{r_n} \sum_{t=2r_n+1}^{3r_n}  \Cov(f(Z_{r,s}), g(Z_{r,t}) ) \\
&= 
\frac{1}{r_n} \sum_{h=r_n}^{2r_n-1} \tfrac{h}{r_n} g_{n1}\big( \tfrac{h}{r_n}\big)   + \frac{1}{r_n} \sum_{h=2r_n}^{3r_n-1} (2-\tfrac{h}{r_n}) g_{n1}\big( \tfrac{h}{r_n}\big),
\end{align*}
which can be bounded in absolute value by $3 \int_1^3 \lvert g_{n1}(\xi) \rvert \diff \xi$. As before, this integral converges to zero by dominated convergence and Lemma~\ref{lem:mom1}. Similarly, $\lvert \Cov(B_1, A_3) \rvert = o(r_n^2)$, and the proof is finished.
\end{proof}

Recall the polygamma function of order $m\ge 0$ and the Riemann zeta function: 
\[
 \psi^{(m)}(z) = \frac{d^{m+1}}{dz^{m+1}} \log \Gamma(z) , \quad(z>0), \quad
 \qquad \zeta(z)=\sum_{k=1}^\infty k^{-z}, 
 \quad (z>1).
\]
A special value that we will need is Ap\'ery's constant, $\zeta(3) \approx 1.2020569$. Recall the functions $f_1, f_2, f_3$ in \eqref{eq:fij}. 
For $i,j \in \{1,2,3\}$, define
\begin{equation}
\label{eq:sigmaij}
  \sigma_{ij} = 
  2\int_0^1 
    \Cov_{{\alpha_0,\xi}} \bigl( f_i(Z_1), \, f_j(Z_2) \bigr) \, 
  \diff \xi.
\end{equation}

\begin{corollary}
\label{cor:cov}
For $\sigma_{ij}$ as in \eqref{eq:sigmaij}, we have
\begin{align*}
\sigma_{11} &= \alpha_0^{-2} \Big[4 \log(2) \Big( \psi(2)^2 + \frac{\pi^2}{6} - \psi(2) \log(2) + \log^2(2)/3 \Big) + \psi(2) \frac{\pi^2}{3} - \frac{7}{8}\zeta(3) - 2 \psi(2)^2 \Big],  \\
\sigma_{22} &= 4\log(2)-2,  \\
\sigma_{33}&=  \alpha_0^{-2} \Big[8 \log(2) -4\Big], \\
\sigma_{12} &= \alpha_0^{-1} \Big[ 2\log^2(2)  - \frac{\pi^2}{6} - (1-\gamma)( 4\log(2) -2) \Big], \\
\sigma_{13}&=  \alpha_0^{-2}\Big[(1+\psi(2))\frac{\pi^2}{6}+2\log^2(2)-4\psi(2)\log(2)+2\psi(2)-\frac{7}{16}\zeta(3)\Big], \\
\sigma_{23} &= \alpha_0^{-1} \Big[ 4\log(2) - 2 - \frac{\pi^2}{6}  \Big].
\end{align*} 
\end{corollary}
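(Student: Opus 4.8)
The plan is to reduce each $\sigma_{ij}$ — which by \eqref{eq:sigmaij} (and Lemma~\ref{lem:cov}) equals $2\int_0^1\Cov_{\alpha_0,\xi}\bigl(f_i(Z_1),f_j(Z_2)\bigr)\,\diff\xi$ with $(Z_1,Z_2)\sim G_{\alpha_0,\xi}$ — to a double integral: an inner integral over the common Marshall--Olkin shock, followed by the outer integral over the overlap $\xi$. First I would pass to unit exponential margins. Writing $U=Z_1^{-\alpha_0}$ and $V=Z_2^{-\alpha_0}$, the pair $(U,V)$ is unit exponential in each coordinate with the Marshall--Olkin law of Lemma~\ref{lem:joint}, and $f_1(Z_1)=-\alpha_0^{-1}U\log U$, $f_2(Z_1)=U$, $f_3(Z_1)=-\alpha_0^{-1}\log U$, with the analogous identities for $Z_2$ in terms of $V$. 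Hence every $\sigma_{ij}$ equals, up to the explicit power of $\alpha_0$ visible in the stated formulas, $2\int_0^1$ of a covariance between one of $U,\log U,U\log U$ and one of $V,\log V,V\log V$; the centring constants are the marginal moments $\Exp[U]=Pf_2=1$, $\Exp[\log U]=-\alpha_0Pf_3=-\gamma$ and $\Exp[U\log U]=-\alpha_0Pf_1=1-\gamma=\psi(2)$.

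Next I would use the shock-model representation of the Marshall--Olkin law: $U=\min(S_1,S_0)$ and $V=\min(S_2,S_0)$ with $S_1,S_2$ i.i.d.\ exponential of rate $\xi$ and $S_0$ an independent exponential of rate $1-\xi$, so that each margin is indeed unit exponential and the joint law is the one in Lemma~\ref{lem:joint} after the substitution $Z_i=(\cdot)^{-1/\alpha_0}$. Conditionally on $S_0=e$ the variables $U$ and $V$ are functions of the independent $S_1,S_2$, hence conditionally independent, and for the admissible functions at hand
\[
  \Exp[g(U)\,h(V)]
  =\int_0^\infty (1-\xi)\,\e^{-(1-\xi)e}\,G_\xi(e)\,H_\xi(e)\,\diff e,
  \qquad G_\xi(e)=\Exp\bigl[g(\min(S_1,e))\bigr],
\]
and likewise for $H_\xi$. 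The three inner expectations needed are elementary: $\Exp[\min(S_1,e)]=\xi^{-1}(1-\e^{-\xi e})$, $\Exp[\log\min(S_1,e)]=-\gamma-\log\xi-E_1(\xi e)$, and the corresponding closed form for $\Exp[\min(S_1,e)\log\min(S_1,e)]$ in terms of incomplete gamma integrals, where $E_1(x)=\int_x^\infty t^{-1}\e^{-t}\,\diff t$. As a sanity check, $g=h=\mathrm{id}$ gives $\Exp[UV]=2/(1+\xi)$, hence $\Cov(U,V)=(1-\xi)/(1+\xi)$ and $\sigma_{22}=2\int_0^1(1-\xi)/(1+\xi)\,\diff\xi=4\log 2-2$, as claimed.

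Then I would carry out the $e$-integration in closed form using the Laplace transforms of the exponential integral, namely $\int_0^\infty\e^{-pe}E_1(qe)\,\diff e=p^{-1}\log(1+p/q)$ and the analogous identity for $\int_0^\infty\e^{-pe}E_1(qe)^2\,\diff e$ (which introduces a dilogarithm), with $p=1-\xi$ and $q=\xi$. This yields each mixed moment $\Exp[f_i(Z_1)f_j(Z_2)]$ as an explicit function of $\xi$ built from $\log\xi$, $\log(1+\xi)$, rational functions and $\mathrm{Li}_2$; subtracting the product of the marginal moments above gives $\Cov_{\alpha_0,\xi}\bigl(f_i(Z_1),f_j(Z_2)\bigr)$. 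Finally I would integrate over $\xi\in[0,1]$: the elementary parts produce the rational combinations of $1$, $\log 2$ and $\log^2 2$, while the remaining parts reduce to the classical evaluations $\int_0^1\tfrac{\log\xi}{1\pm\xi}\,\diff\xi=-\pi^2/12,\ -\pi^2/6$, $\int_0^1\tfrac{\log^2\xi}{1-\xi^2}\,\diff\xi=\tfrac{7}{4}\zeta(3)$ and to the dilogarithm integrals, which supply the $\pi^2/6=\zeta(2)$ terms and the $\tfrac{7}{8}\zeta(3)$ and $\tfrac{7}{16}\zeta(3)$ terms. Collecting and simplifying with $\psi(2)=1-\gamma$ and $\Gamma''(2)=\psi(2)^2+\psi'(2)=(1-\gamma)^2+\pi^2/6-1$ should reproduce the six displayed identities; integrability of the integrands (controlled by $g_{\eta,\alpha_0}$) is never an issue since the $\xi$-integrands are continuous and bounded on $[0,1]$.

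The main obstacle is this last stretch, and in particular the appearance of Apéry's constant: one must push the two nested integrations through products of exponential integrals and dilogarithms without arithmetic slips and then recognise $\sum_{k\ge0}(2k+1)^{-3}=\tfrac{7}{8}\zeta(3)$ as the source of the $\zeta(3)$ terms in $\sigma_{11}$ and $\sigma_{13}$ (they are absent from the other four because there the $\xi$-integrand contains no $\log^2\xi$-against-$(1-\xi^2)^{-1}$ type piece); the bookkeeping across the six cases is the bulk of the work. An appealing alternative — presumably the route of Appendix~\ref{app:cov}, given the paper's emphasis on moments of max-stable pairs — is to first derive a general formula for $\Exp[f_i(Z_1)f_j(Z_2)]$ in terms of the Pickands dependence function $A$ and its derivative (via integration by parts in the joint distribution function, equivalently via the spectral representation of the max-stable law), and only afterwards specialise to the piecewise-linear $A_\xi$ of \eqref{eq:MO:A}; since $A_\xi'$ is constant on $[0,\tfrac12]$ and on $[\tfrac12,1]$, that formula collapses to the same one-dimensional $\xi$-integrals, so the two approaches converge to the same endgame.
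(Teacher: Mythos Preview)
Your reduction to unit exponential margins $(U,V)=(Z_1^{-\alpha_0},Z_2^{-\alpha_0})$ and the identification of the six relevant covariances among $U$, $\log U$, $U\log U$ is exactly what the paper does in its proof of the corollary. From there, however, you and the paper part ways. The paper's route (Appendix~\ref{app:cov}, which you correctly anticipate in your final paragraph) is to derive, for a \emph{general} min-stable pair $(S,T)$ with Pickands dependence function $A$, closed-form expressions for $H_{k,\ell}(a,b)=\Cov\bigl(S^a(\log S)^k,T^b(\log T)^\ell\bigr)$ as single integrals over $w\in[0,1]$ involving $A(w)$ and $\log A(w)$ (Corollary~\ref{cor:cov:A}); only then does it specialise to the piecewise-linear Marshall--Olkin function $A_\xi$ and integrate the resulting double integral over $(w,\xi)\in[0,1]^2$. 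Your route instead exploits the latent-shock representation $U=\min(S_1,S_0)$, $V=\min(S_2,S_0)$, conditioning on the common shock $S_0$ to obtain conditional independence and then integrating out via Laplace transforms of $E_1$ and $E_1^2$. Both are correct and, as you say, converge to the same family of $\xi$-integrals; the paper's approach buys reusable formulas valid for any bivariate extreme-value copula, while yours is more direct here because the conditional-independence structure immediately factorises each mixed moment into a product of one-dimensional integrals. Your sanity check for $\sigma_{22}$ and your identification of the classical integrals that produce the $\pi^2/6$ and $\zeta(3)$ contributions are on target.
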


\begin{proof}
If $(Z_1, Z_2) \sim G_{\alpha_0, \xi}$, then $(S, T) = (Z_1^{-\alpha_0}, Z_2^{-\alpha_0})$ is a pair of unit exponential random variables whose distribution is jointly min-stable with Marshall--Olkin Pickands dependence function $A_\xi$; see~\eqref{eq:MO:A}--\eqref{eq:MO}. Further, we have
\begin{align*}
  f_1(Z_1) 
  &= Z_1^{-\alpha_0} \log(Z_1) = -\alpha_0^{-1} S \log(S), \\
  f_2(Z_1)
  &= Z_1^{-\alpha_0} = S, \\
  f_3(Z_1)
  &= \log(Z_1) = -\alpha_0^{-1} \log(S),
\end{align*}
and similarly for $f_j(Z_2)$. 
We obtain
\begin{align*}
  \Cov_{\alpha_0,\xi} \bigl(f_1(Z_1), f_1(Z_2)\bigr)
  &= 
  \alpha_0^{-2} 
  \Cov_\xi \bigl( S \log(S), T \log(T) \bigr), 
\\
  \Cov_{\alpha_0,\xi} \bigl(f_2(Z_1), f_2(Z_2)\bigr)
  &= 
  \Cov_\xi (S, T), 
\\
  \Cov_{\alpha_0,\xi} \bigl(f_3(Z_1), f_3(Z_2)\bigr)
  &=
  \alpha_0^{-2} 
  \Cov_\xi \bigl( \log(S), \log(T) \bigr), 
\\
  \Cov_{\alpha_0,\xi} \bigl(f_1(Z_1), f_2(Z_2)\bigr)
  &=
  -\alpha_0^{-1}
  \Cov_\xi \bigl( S \log(S), T \bigr),
\\
  \Cov_{\alpha_0,\xi} \bigl(f_1(Z_1), f_3(Z_2)\bigr)
  &=
  \alpha_0^{2}
  \Cov_\xi \bigl( S \log(S), \log(T) \bigr),
\\
  \Cov_{\alpha_0,\xi} \bigl(f_2(Z_1), f_3(Z_2)\bigr)
  &=
  -\alpha_0^{-1}
  \Cov_\xi \bigl( S, \log(T) \bigr),
\end{align*}
where the index $\xi \in [0, 1]$ stresses the dependence of the covariances on the Marshall--Olkin parameter. In Appendix~\ref{app:cov}, the covariances on the right-hand side are denoted by $H_{k,\ell}(a, b; \xi) = \Cov_\xi( S^a (\log(S))^k, T^b (\log(T))^\ell)$, and their integrals over $\xi \in [0, 1]$ are computed in Corollary~\ref{cor:cov:int}. Multiplying by two, we find the stated formulas.
\end{proof}

Approximate values of $\sigma_{ij}$ in \eqref{eq:sigmaij} are summarized in the following matrix:
\begin{equation}
\label{eq:cov}
\bm \Sigma_{\bm Y}=(\sigma_{ij})_{i,j=1}^{3} = \frac{1}{\alpha_0^2} 
\left(
\begin{array}{lll}
  \phantom{-}1.5140 & -1.0107 \cdot \alpha_0 & \phantom{-}0.8712 \\[0.5ex]
 -1.0107 \cdot \alpha_0 & \phantom{-} 0.7726\cdot \alpha_0^2 & -0.8723 \cdot\alpha_0 \\[0.5ex]
 \phantom{-}0.8712 & -0.8723 \cdot \alpha_0  & \phantom{-}1.5434
 \end{array}
 \right).
\end{equation}

\begin{proof}[Proof of Theorem~\ref{theo:weak}]
The theorem is a consequence of Theorem~2.5  in \cite{BucSeg18}. In the notation of that paper, let $v_n=\sqrt{m_n}$ and recall that $k_n=n-r_n+1$ and $m_n=\ip{n/r_n}$. As before, write $k=k_n$, $m=m_n$, and so on. Subsequently, redefine $X_{n,t} = M_{r, t} \vee c_0$ with $c_0 > 0$ from Condition~\ref{cond:bias}. Recall that, as a consequence of Condition~\ref{cond:div}, such a redefinition does not change the estimator on a sequence of events whose probability converges to one. Hence, the asymptotic distribution is unaffected as well and in particular the asymptotic bias (which is identifiable from the limiting distribution) does not depend on $c_0$. We need to show the following three properties:
\begin{enumerate}[(i)]
\item  $\lim_{n\to\infty} \Prob(X_{n,1}= \dots = X_{n,k}) = 0$.

\item There exist constants $0<\alpha_- < \alpha_0 < \alpha_+ < \infty$ such that
\[
\PP_n f = \frac{1}{k} \sum_{t=1}^{k} f(X_{n,t}/\sigma_{r}) \dto \int_0^\infty f(x) \, p_{\alpha_0,1}(x) \, \diff x
\]
for all $f \in \Fc_2(\alpha_-, \alpha_+)$, where
\[
 \Fc_2(\alpha_-, \alpha_+)
= 
\{x \mapsto \log x\} \cup \{ x \mapsto x^{-\alpha} (\log x)^k : k=0,1,2, \; \alpha \in (\alpha_-,\alpha_+)\}.
\]

\item
For $f_j$ as in \eqref{eq:fij}, we have
\begin{equation*}
  (\GG_n f_1, \GG_n f_2, \, \GG_n f_3)^T
  \dto
  \bm{Y} \sim \Nc_3( B, \bm \Sigma_{\bm Y})
  , \qquad n \to \infty,
\end{equation*}
where $\GG_n$ and $B$ are as in Theorem~\ref{theo:weak} and where $\bm \Sigma_{\bm Y}$ is as in Corollary~\ref{lem:cov}, see in particular equation~\eqref{eq:cov}.
\end{enumerate}

The not-all-tied property in (i) follows immediately from Lemma~A.5 in \cite{BucSeg18}: note that $k_n$ in Condition~3.3 in that paper corresponds to $m_n$ here.

Consider (ii). Choose $\eta\in(2/\omega,\nu)$ with $\omega$ and $\nu$ from Condition~\ref{cond:alpha} and \ref{cond:moment}. Further, let  $0<\alpha_-<\alpha_0<\alpha_+$ be arbitrary (further constraints on  $\alpha_+$ will be imposed below).  
Lemma~A.6 in \cite{BucSeg18}  implies that $\lim_{n\to\infty}\Exp[\PP_nf] = Pf$, as long as $\alpha_+$ is chosen smaller than $2\alpha_0$ (in that case, any $f\in \Fc_2(\alpha_-, \alpha_+)$ can be bounded in absolute value by $g_{0,\alpha_0}$). Further, $\PP_nf - \Exp[\PP_nf]= m^{-1/2} \GG_n f=O_\Prob(m^{-1/2})=o_\Prob(1)$, as will be shown below in the proof of (iii). These two facts imply (ii). 

Consider (iii).  
The empirical process $\GG_n$ can be decomposed into a stochastic term and a bias term:
\[
\GG_n = \sqrt{m} (\PP_n - P_n) + \sqrt{m} (P_n - P) \equiv \tilde \GG_n  + B_n,
\]
where $P_n$ denotes the distribution of $X_{n,1}/\sigma_{r}$. For $j=1,2,3$, we have $B_n(f_j) \to B(f_j)$ by Condition~\ref{cond:bias}.
 Let us  show that the finite-dimensional distributions of $(\tilde \GG_n(f))_{f\in \Fc_2(\alpha_-, \alpha_+)}$ converge weakly to those of a zero-mean Gaussian process $\GG$ with covariance 
\[
\Cov(\GG f, \GG g)= 2\int_0^1 \Cov_{Q_{\alpha_0,\xi}} (f(U_1), g(U_2)) \, \diff \xi, \qquad f,g \in \Fc_2(\alpha_-, \alpha_+).
\]
This certainly implies (iii), and is also sufficient to close the missing gap in the proof of (ii) above.

By the Cram\'er--Wold device, it suffices to show weak convergence $\tilde \GG_n(h) \dto \GG(h)$ for  $h=v^Tg$ where $v$ is a column vector and where $g$ is a column vector of functions in $\Fc_2(\alpha_-, \alpha_+)$. Here, $\tilde \GG_n(h)$ and $\GG(h)$ are defined by linearity; in particular, $\GG(h)$ is centred Gaussian with variance $2v^T\big( \int_0^1 \Cov_{Q_{\alpha_0,\xi}} (g(U_1), g(U_2)) \, \diff \xi \big) v$, the integral being defined entrywise. Also, note that $|h|^{2+\delta} \lesssim g_{\eta,\alpha_0}$, provided we choose $\delta\in(2/\omega,\eta)$ and $\alpha_+>\alpha_0$ sufficiently small. Now, let 
\[
I_1=\{1, \dots, r\}, \quad I_2=\{r+1, \dots, 2r\}, \quad \dots, \quad I_m = \{(m-1)r+1, \dots, mr\}
\]
denote the indices making up the $m$ disjoint blocks of size $r$. Further, let $m^*=m^*_n \ge 3$, $m^*\le m$, be an integer sequence converging to infinity such that $m^*=o(m^{\delta/(2(1+\delta))})$ as $n\to\infty$. Define
\begin{align*}
J_1^+ &= I_1 \cup \dots \cup I_{m^*-2}, \quad & J_1^- &= I_{m^*-1} \cup I_{m^*},  \\
J_2^+ &= I_{m^*+1} \cup \dots \cup I_{2m^*-2}, \quad & J_2^- &= I_{2m^*-1} \cup I_{2m^*},  \\
J_3^+ &= I_{2m^*+1} \cup \dots \cup I_{3m^*-2}, &  J_3^- &= I_{3m^*-1} \cup I_{3m^*}, \qquad \dots 
\end{align*}
and so on, that is, successively merge $m^*-2$ of the initial disjoint blocks to a new big block $J_j^+$, and then 2 of the initial disjoint blocks to a new small block $J_j^-$. In total, we obtain $q=q_n=\ip{m/m^*} \to \infty$ new (disjoint) big blocks and small blocks. For simplicity, assume that $m^*q=m$, so that each time point $1, \dots ,n$  is covered by exactly one of the new blocks; otherwise, a negligible remainder term arises. We may then write
\begin{align*}
\tilde \GG_n(h) = \sqrt{m} \bigg( \frac{1}{k} \sum_{t=1}^{k} h(X_{n,t}/\sigma_{r})  - \Exp[h(X_{n,t}/\sigma_{r})] \bigg) 
&=
 \frac{1}{\sqrt{q}} \sum_{j=1}^{q} S_{nj}^{+} +   \frac{1}{\sqrt{q}}\sum_{j=1}^{q} S_{nj}^{-}  \\
 &\equiv A_n^++A_n^- , 
\end{align*}
where, for $j=1, \dots, q$,
\[
  S_{nj}^{\pm}
  = 
  \frac{\sqrt{mq}}{k} 
  \sum_{s \in J_j^{\pm}} 
  \{ h(X_{n,s}/\sigma_{r}) - \Exp[h(X_{n,s}/\sigma_{r})] \}. 
\]
It suffices to show that $A_n^- =o_\Prob(1)$ and that $A_n^+ \dto \GG(h)$ as $n\to\infty$. 

Let us prove that $A_n^-$ is negligible, and for that purpose consider its variance, since it is already centered. We have 
\begin{align*}
\Var(A_n^-) 
&= 
\Var(S_{n1}^-) + \frac{2}{q} \sum_{j=1}^{q-1} (q-j) \Cov(S_{n1}^-, S_{n,1+j}^-) \\
&\le
3\Var(S_{n1}^-) + 2 \sum_{j=2}^{q-1}  \Cov(S_{n1}^-, S_{n,1+j}^-).
\end{align*}
Recall that $|h|^{2+\delta} \lesssim g_{\eta,\alpha_0}$ with $\delta\in(2/\omega,\eta)$. Hence, by Condition~\ref{cond:moment} and  since  $q=\ip{m/m^*}$, 
\[
\| S_{n1}^-\|_{2+\eta} \le \frac{\sqrt{mq}}{k} 4r \cdot \| h(X_{n,1}/\sigma_{r}) \|_{2+\eta} \lesssim \frac1{\sqrt{m^*}} \cdot \| h(X_{n,1}/\sigma_{r}) \|_{2+\eta} =o(1).
\]
Thus $\Var(S_{n1}^-)=o(1)$ as well. Further, by Lemma~3.1 in \cite{DehPhi02},
\[
\sum_{j=2}^{q} \Cov(S_{n1}^-, S_{n,1+j}^-) 
\, \lesssim \,
q \, \lVert S_{n1}^- \rVert_{2+\delta}^2 \, \alpha(r)^{\delta/(2+\delta)} 
\, \lesssim \,
\frac{m}{(m^*)^2}  \alpha(r)^{\delta/(2+\delta)} ,
\]
which is of the order $o((m^*)^{-2})$ by Condition~\ref{cond:alpha} and the fact that $2/\delta>\omega$ by the choice of $\delta$. 

Now, consider the weak convergence of $A_n^+$. By a standard argument based on characteristic functions (see, e.g., the proof of Theorem 3.6 in \citealp{BucSeg18}), we may assume that the triangular array $S_{n1}^+, \dots, S_{nq}^+$ is rowwise independent. As a consequence, we may apply Lyapounov's central limit theorem (Theorem 27.3 in \citealp{Bil79}): 
provided that
$\Exp[ (S_{nj}^+)^2]$ converges to $\Var(\GG(h))$ and that
\begin{align} \label{eq:lya}
\lim_{n\to\infty}\frac{\sum_{j=1}^q \Exp[|S_{nj}^+|^{2+\delta}]}{\big(\sum_{j=1}^q \Exp[|S_{nj}^+|^2] \big)^{1+\delta/2}}=0,
\end{align}
we obtain that $A_n^{+}$ converges weakly to $\GG(h)$ and the proof of the claimed convergence of the finite-dimensional distributions of the empirical process $(\tilde \GG_n(f))_{f\in \Fc_2(\alpha_-, \alpha_+)}$  is finished. 
Now, 
\[
\| S_{nj}^+\|_{2+\delta} \le \frac{\sqrt{mq}}{k} 2 (m^*-2)r \cdot \| h(X_{n,1}/\sigma_{r}) \|_{2+\delta} \lesssim \sqrt{m^*} \cdot \| h(X_{n,1}/\sigma_{r}) \|_{2+\delta} =O(\sqrt{m^*})
\]
by Condition~\ref{cond:moment} (recall that $|h|^{2+\delta} \lesssim g_{\eta,\alpha_0}$).
As a consequence, provided that $\Exp[ (S_{nj}^+)^2]$ is converging, the fraction in \eqref{eq:lya} is of the order $O(q^{-\delta/2}(m^*)^{1+\delta/2})$. Since $q=\ip{m/m^*}$ and 
$m^*=o(m^{\delta/(2(1+\delta))})$, this expression converges to $0$. 

It remains to show that  $\Exp[ (S_{nj}^+)^2]=\Var(S_{nj}^+)$ converges to $\Var(\GG(h))$. This follows similarly as in the proof of Lemma~\ref{lem:cov}: since $q=\ip{m/m^*}$ and $m=\ip{n/r}$, we may write
\[
S_{n1}^+ = \frac{\sqrt{mq}}{k} \sum_{j=1}^{m^*-2} C_j =  \bigg( \frac{1}{r\sqrt{m^*}} \sum_{j=1}^{m^*-2} C_j \bigg)(1+o(1)), \qquad n\to\infty,
\]
where $C_j = \sum_{s\in I_j}\{h(Z_{r,s}) - \Exp[h(Z_{r,s})]\}$. Now,
\[
\Var\bigg(\frac{1}{r\sqrt{m^*}} \sum_{j=1}^{m^*-2} C_h\bigg) = \frac{m^*-2}{m^*} \frac{1}{r^2} \Var(C_1) + \frac{2}{r^2} \sum_{j=1}^{m^*-3} (1-\tfrac{(2+j)}{m^*}) \Cov(C_1, C_{1+j}).
\]
Exactly as in the proof of Lemma~\ref{lem:cov}, the right-hand side can be seen to be equal to
$
\Cov(C_2, C_1+C_2+C_3) /r^2 + o(1),
$
which further can be seen to converge to $\Var(\GG(h))=2 v^T \Cov(g(U_1), g(U_2)) \, v$, as asserted.
\end{proof}

\begin{proof}[Proof of Theorem~\ref{theo:ml}]
We apply Theorem~\ref{theo:weak} and need to check Conditions~\ref{cond:max}--\ref{cond:bias} as well as the expression of the bias function.

The max-domain of attraction Condition~\ref{cond:max} follows from first-order regular variation of $-\log F$ in \eqref{eq:RV}, which is a consequence of second-order regular variation in Condition~\ref{eq:SV:2}. Condition~\ref{cond:div} on the smallest block maxima follows in the same way as the proof of Condition~3.2 in the proof of Theorem~4.2 in \citet{BucSeg18}; in particular, \eqref{eq:ka} implies that $\log(m_n) = o(r_n)$ as $n \to \infty$, see Remark~4.5 in \citet{BucSeg18}. Strong mixing with rate in Condition~\ref{cond:alpha} is trivially fulfilled. Finally, Conditions~\ref{cond:moment} and~\ref{cond:bias} are the same as Conditions~3.4 and~3.5 in \citet{BucSeg18}, respectively, and are shown to be implied by second-order regular variation in the proof of Theorem~4.2 in the cited paper. \end{proof}

\begin{proof}[Proof of Corollary~\ref{cor:RT}]
The bias condition implies that
\begin{align*}
\sqrt{m_n} \bigg( \frac{\widehat \RL_n(T,r_n) }{\RL(T,r_n) } -1 \bigg) 
&=
\sqrt{m_n} \bigg( \frac{\widehat \RL_n(T,r_n) }{\widetilde \RL(T,r_n) } -1\bigg)  \frac{\widetilde \RL(T,r_n) }{\RL(T,r_n) } + \Lambda_n(T) \\
&= 
\sqrt{m_n} \bigg( \frac{\widehat \RL_n(T,r_n) }{\widetilde \RL(T,r_n) } -1 \bigg)(1+o(1)) + \Lambda_n(T).  
\end{align*}
The first factor on the right-hand side of this display can be written as
\begin{align*}
&\,
\sqrt{m_n} \Big( \frac{\hat \sigma_{n}}{ \sigma_{r_n} } b_T^{1/\alpha_0-1/\hat \alpha_n} - 1\Big) \\
=&\,
\sqrt{m_n}  \Big( \frac{\hat \sigma_{n}}{ \sigma_{r_n} }  - 1\Big)\big(b_T^{1/\alpha_0-1/\hat \alpha_n} - 1\big) + \sqrt{m_n} \Big( \frac{\hat \sigma_{n}}{ \sigma_{r_n} }  - 1\Big)+\sqrt{m_n} \big(b_T^{1/\alpha_0-1/\hat \alpha_n} - 1\big)  \\
=&\, 
o_\Prob(1) +\sqrt{m_n} \Big( \frac{\hat \sigma_{n}}{ \sigma_{r_n} }  - 1\Big)+\sqrt{m_n} (\hat \alpha_n - \alpha_0) \frac{\log(b_T)}{\hat \alpha_n \alpha_0} \\
=&\,
\beta(T,\alpha_0)' \cdot \sqrt{m_n} \,
  \begin{pmatrix} 
    \hat{\alpha}_n - \alpha_0  \\ 
    \hat{\sigma}_n / \sigma_{r_n} - 1  
  \end{pmatrix}  + o_\Prob(1),
\end{align*}
which proves the corollary.
\end{proof}

\appendix

\section{Covariance calculations for min-stable distributions}
\label{app:cov}

Let the pair of random variables $(S, T)$ have a min-stable distribution with unit exponential margins and Pickands dependence function $A : [0, 1] \to [1/2, 1]$, i.e.,
\[
  \Pr(S > x, T > y) = \exp\{ - (x+y) \, A(x/(x+y)) \}, \qquad (x, y) \in [0, \infty)^2 \setminus \{ (0, 0) \}.
\]
The function $A$ is convex and satisfies $w \vee (1-w) \le A(w) \le 1$ for all $w \in [0, 1]$. \cite{Tia80} obtained the formula
\begin{equation}
\label{eq:TdO}
  \Cov(\log S, \log T)
  =
  \int_0^1 \frac{ - \log A(w) }{w(1-w)} \diff w.
\end{equation}
To compute the asymptotic covariance matrix of the maximum likelihood estimator based on sliding blocks, we seek to generalize \eqref{eq:TdO} to 
\begin{equation}
\label{eq:Hklab}
  H_{k,\ell}(a,b) 
  = \Cov \bigl( S^a (\log S)^k, \, T^b (\log T)^\ell \bigr),
\end{equation}
for $a,b \in [0, \infty)$ and $k,\ell\in \N_0=\{0,1,2,\dots\}$. In particular, we are interested in the Marshall--Olkin distribution with parameter $\xi\in[0,1]$, which has Pickands dependence function $A = A_\xi$ in \eqref{eq:MO:A}.

For any $a \in [0, \infty)$ and $k\in\N_0$,
\begin{align*}
\Exp[S^a (\log(S))^k] =   \int_0^\infty s^a (\log(s))^k  \e^{-s} \, \diff s =  \Gamma^{(k)}(1+a),
\end{align*}
where $\Gamma^{(k)}$ denotes the $k$-th derivative of the Euler gamma function. For $k=1$, we may further write $\Gamma'(1+a) =  \Gamma(1+a) \, \psi(1+a)$, where $\psi = \Gamma'/\Gamma$ is the digamma function. For notational convenience, the zero-th (partial) derivative of a function is to be interpreted as the function itself.

\begin{lemma}
Let $(S, T)$ have a min-stable distribution with unit exponential margins and Pickands dependence function $A$. Then, for all $a,b>0$ and all $k,\ell\in\N = \{0,1,2,3,\dots\}$, we have
\begin{align*}
H_{k,\ell}(a,b) 
	&= \int_0^1 \frac{\partial^{k+\ell}}{\partial a^k \partial b^\ell} \left( ab \, \Gamma(a+b) \frac{w^{a-1} (1-w)^{b-1}}{(A(w))^{a+b}} \right) \, \diff w - 
	\Gamma^{(k)}(1+a) \, \Gamma^{(\ell)}(1+b), \\
H_{k,1}(a,0) 
	&= \int_0^1 \frac{\partial^k}{\partial a^k} \left( \Gamma(a+1) \big( (A(w))^{-a} - 1 \big) w^{a-1} (1-w)^{-1} \right) \, \diff w, \\
H_{1,1}(0,0) 
	&=\int_0^1 \frac{ - \log A(w) }{w(1-w)} \diff w.
\end{align*}
Further cases can be obtained by symmetry: we have $H_{k,\ell}(a,b)=\tilde H_{\ell,k}(b,a)$, where $\tilde H$ is given by the above formulas, but with $A(w)$ replaced by $A(1-w)$.
\end{lemma}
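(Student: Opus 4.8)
The plan is to reduce everything to a single Mellin-type integral representation for $\Exp[S^aT^b]$ and then recover the logarithmic moments by differentiating in the exponents, using $S^a(\log S)^k=\partial_a^kS^a$ and $T^b(\log T)^\ell=\partial_b^\ell T^b$; subtracting the product of the marginal moments $\Gamma^{(k)}(1+a)\,\Gamma^{(\ell)}(1+b)$ recorded just before the lemma then produces the covariances. First, for $a,b>0$ one has $\Exp[S^aT^b]=\int_0^\infty\!\int_0^\infty ab\,x^{a-1}y^{b-1}\,\Pr(S>x,T>y)\,\diff x\,\diff y$ (apply $\Exp[UV]=\int_0^\infty\!\int_0^\infty\Pr(U>u,V>v)\,\diff u\,\diff v$ with $U=S^a,V=T^b$, then substitute $u=x^a$, $v=y^b$, legitimate since $S,T\ge0$). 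Inserting $\Pr(S>x,T>y)=\exp\{-(x+y)A(x/(x+y))\}$ and changing to the coordinates $s=x+y$, $w=x/(x+y)$ (Jacobian $s$) makes the inner $s$-integral a gamma integral, yielding
\[
  \Exp[S^aT^b]
  = \int_0^1 ab\,\Gamma(a+b)\,\frac{w^{a-1}(1-w)^{b-1}}{A(w)^{a+b}}\,\diff w,
  \qquad a,b>0,
\]
finite because $w\vee(1-w)\le A(w)\le1$ controls both endpoints.

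Next I would justify interchanging differentiation with both the expectation and the integral. For $(a,b)$ in a small neighbourhood of any fixed point of $(0,\infty)^2$ (so all occurring exponents stay above $-1$), the variables $S^a|\log S|^kT^b|\log T|^\ell$ are uniformly integrable, bounded via Cauchy--Schwarz by products of $\Exp[S^c|\log S|^p]=\int_0^\infty s^c|\log s|^p\e^{-s}\,\diff s<\infty$; dominated convergence applied to the difference quotients gives $\Exp[S^a(\log S)^kT^b(\log T)^\ell]=\partial_a^k\partial_b^\ell\Exp[S^aT^b]$. The right-hand side of the displayed identity may likewise be differentiated under the integral: on compacts in $(0,\infty)^2$ the integrand and all its $(a,b)$-derivatives are dominated by one integrable function of $w$ (the extra $\log w$, $\log(1-w)$, $\log A(w)$ factors do not spoil integrability at $w=0,1$ while $a,b>0$). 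Combining the two and subtracting $\Gamma^{(k)}(1+a)\Gamma^{(\ell)}(1+b)=\Exp[S^a(\log S)^k]\Exp[T^b(\log T)^\ell]$ gives the first formula.

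For the boundary case $b=0$ the representation degenerates, since $(1-w)^{b-1}$ is non-integrable at $b=0$, so I would extract the finite part. With $\phi_b(w)=w^{a-1}A(w)^{-(a+b)}$, so $\phi_b(1)=1$, write $\int_0^1(1-w)^{b-1}\phi_b(w)\,\diff w=b^{-1}+\int_0^1(1-w)^{b-1}\{\phi_b(w)-1\}\,\diff w$; the last integral is regular at $b=0$ because $w\vee(1-w)\le A(w)\le1$ forces $\phi_b(w)-1=O(1-w)$ as $w\uparrow1$, so its integrand is $O((1-w)^b)$ there. Hence $\Exp[S^aT^b]=a\Gamma(a+b)+ab\Gamma(a+b)K(a,b)$ with $K(a,\cdot)$ continuous at $0$, and differentiating at $b=0$ gives $\Exp[S^a\log T]=a\Gamma'(a)+a\Gamma(a)\int_0^1\frac{w^{a-1}A(w)^{-a}-1}{1-w}\,\diff w$. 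Splitting $w^{a-1}A(w)^{-a}-1=w^{a-1}(A(w)^{-a}-1)+(w^{a-1}-1)$ and using the digamma identity $\int_0^1\frac{1-w^{a-1}}{1-w}\,\diff w=\psi(a)+\gamma$ together with $a\Gamma'(a)=\Gamma(1+a)\psi(a)$, the $\psi(a)$ contributions cancel, leaving $\Exp[S^a\log T]=\Gamma(1+a)\int_0^1\frac{w^{a-1}(A(w)^{-a}-1)}{1-w}\,\diff w-\gamma\Gamma(1+a)$; subtracting $\Exp[S^a]\Exp[\log T]=-\gamma\Gamma(1+a)$ produces $H_{0,1}(a,0)$. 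Differentiating $k$ times in $a$ (again under the integral, $a>0$) gives the second formula, and letting $a\downarrow0$ in $\partial_aH_{0,1}(a,0)$ --- where the integral itself vanishes at $a=0$, so only the term in which $\partial_a$ hits $A(w)^{-a}$ survives --- yields $H_{1,1}(0,0)=\int_0^1\frac{-\log A(w)}{w(1-w)}\,\diff w$, consistent with \eqref{eq:TdO}. The symmetry $H_{k,\ell}(a,b)=\tilde H_{\ell,k}(b,a)$ follows at once from the symmetry of $\Cov$ and the fact that $(T,S)$ is again min-stable with Pickands dependence function $w\mapsto A(1-w)$.

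The routine part is the first step and the bookkeeping of derivatives; the delicate point is the boundary case, where the naive representation collapses to $0\cdot\infty$ at $b=0$ and the cancellation of the $\psi(a)$ terms via the digamma identity is exactly what makes the clean formula emerge. Throughout, the only structural properties of $A$ that are used are $A(0)=A(1)=1$ and the sandwich $w\vee(1-w)\le A(w)\le1$, which suffices for all the endpoint estimates (convexity is not needed).
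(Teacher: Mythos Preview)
Your treatment of the interior case $a,b>0$ is exactly the paper's: express $\Exp[S^aT^b]$ as an integral of the joint survival function, pass to the $(s,w)$ coordinates, recognise the gamma integral, and then differentiate under both the expectation and the $w$-integral.

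Where you diverge is the boundary $b=0$. The paper does not push the Mellin representation to the edge; it restarts from Hoeffding's identity
\[
  \Cov(S^a,\log T)
  =\iint\bigl\{\Pr(S^a>x,\log T>y)-\Pr(S^a>x)\Pr(\log T>y)\bigr\}\,\diff(x,y)
\]
and applies the same $(s,w)$ substitution directly to this integral, arriving at the stated formula in one step and never encountering a singular $b^{-1}$ or needing the digamma identity. Your route---isolate the finite part $K(a,b)$, split $w^{a-1}A(w)^{-a}-1$, and cancel the $\psi(a)$ contributions via $\int_0^1(1-w^{a-1})/(1-w)\,\diff w=\psi(a)+\gamma$---is correct and has the appeal of deriving every case from the single representation for $\Exp[S^aT^b]$, at the cost of the algebraic cancellation. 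For $H_{1,1}(0,0)$ the paper simply invokes the known Tiago de Oliveira formula and remarks that it follows by the same Hoeffding argument; your $a\downarrow0$ limit gives the right answer, but the interchange of limit and integral deserves a line of justification, since the obvious majorant involves $w^{a-1}\lvert\log w\rvert$ and one has to use the extra factor $A(w)^{-a}-1=O(aw)$ near $w=0$ (from $1-A(w)\le w$) to get a uniform bound. The symmetry remark is the same in both.
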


\begin{proof} In the following, let $a,b>0$. 

\medskip\noindent\emph{$\bullet$ Case $H_{0,0}(a,b)$.} By Fubini's theorem,
\begin{align*}
  \Exp[S^a T^b]
  &=
  \Exp \left[ \int_0^S ax^{a-1} \, \diff x \, \int_0^T by^{b-1} \diff y \right] \\
  &=
  \Exp \left[ \int_{(0, \infty)^2} \ind(S > x, T > y) \, ax^{a-1} \, by^{b-1} \diff (x, y) \right] \\
  &=
  \int_{(0, \infty)^2} \Prob(S > x, T > y) \, ax^{a-1} \, by^{b-1} \diff (x, y) \\
  &=
  \int_{(0, \infty)^2} \e^{-(x+y) \, A(x/(x+y)) } \, ax^{a-1} \, by^{b-1} \diff (x, y).
\end{align*}
Substituting $x+y = s$ and $x/(x+y) = w$, so $x = sw$ and $y = s(1-w)$, with Jacobian $\lvert \partial(x, y) / \partial(s, w) \rvert = s$, and following up by substituting $s \, A(w) = t$, we find
\begin{align*}
  \Exp[S^a T^b]
  &=
  \int_{w=0}^1 \int_{s=0}^\infty \e^{-s \, A(w)} \, a(sw)^{a-1} \, b(s(1-w))^{b-1} \, s \diff s \diff w \\
  &=
  ab\int_{w=0}^1 \int_{t=0}^\infty \e^{-t} \, (tw/A(w))^{a-1} \, (t(1-w)/A(w))^{b-1} \, (A(w))^{-2} \, t \, \diff t \diff w \\
  &=
  ab\int_{w=0}^1 \frac{w^{a-1} (1-w)^{b-1}}{(A(w))^{a+b}} \int_{t=0}^\infty t^{a+b-1} \e^{-t} \diff t \diff w \\
  &=
  ab \, \Gamma(a+b) \int_{w=0}^1 \frac{w^{a-1} (1-w)^{b-1}}{(A(w))^{a+b}} \, \diff w.
\end{align*}

\medskip\noindent\emph{$\bullet$ Case $H_{k,\ell}(a,b)$.} 
By the mean value theorem, we have, for $b > 0$, $t > 0$ and $h$ such that $b+h > 0$, the inequality
\[
  \left\lvert \frac{t^{b+h} - t^b}{h} \right\rvert
  \le
  \max(t^{b+h}, t^b) \lvert \log t \rvert.
\]
Since $\Exp[ T^b (\log(T))^\ell ] < \infty$ for all $b > 0$ and $\ell = 0, 1, 2, \ldots$, an application of the dominated convergence theorem implies that we can interchange expectation and partial derivatives to find that
\begin{align*}
  \Exp[ S^a (\log(S))^k T^b (\log(T))^\ell ]
  &=
  \Exp\left[ 
    \frac{\partial^k S^a}{\partial a^k} 
    \frac{\partial^\ell T^b}{\partial b^\ell} 
  \right]
  =
  \frac{\partial^{k+\ell}}{\partial a^{k} \partial b^\ell} 
  \Exp[ S^a T^b ] \\
  &=
  \frac{\partial^{k+\ell}}{\partial a^{k} \partial b^\ell} 
  \int_{w=0}^1 
    ab \, \Gamma(a+b)  
    \frac{w^{a-1} (1-w)^{b-1}}{(A(w))^{a+b}} \, 
  \diff w.
\end{align*}
By the same type of argument, we can interchange the partial derivatives and the integral over $w$: in a small neighbourhood of a fixed pair $(a, b) \in (0, \infty)^2$, the partial derivatives with respect to $a$ and $b$ are bounded by a constant multiple of the function $w \mapsto w^{a-h-1} \lvert \log(w) \rvert^{k} w^{b-h-1} \lvert \log(1-w) \rvert^{\ell}$, for some $0 < h < a \wedge b$, a function which is integrable over $w \in (0, 1)$.

\medskip\noindent\emph{$\bullet$ Case $H_{0,1}(a,0)$.} By Hoeffding's covariance formula, 
\begin{align*}
\Cov\left( S^a, \log(T) \right)
=
\int_{(0,\infty)\times\R} \{ \Prob( S^a > x, \log(T) >y) - \Prob( S^a > x) \, \Prob(\log(T) >y) \} \, \diff (x,y),
\end{align*}
where 
\[
\Prob( S^a > x, \log(T) >y) = \exp\left\{ - (x^{1/a}+\e^{y}) A \left( \frac{x^{1/a}}{x^{1/a}+\e^{y}} \right) \right\}.
\]
Apply the change of variables $s=x^{1/a}+\e^{y}$ and $w=x^{1/a}/(x^{1/a}+\e^{y})$, so $x=(sw)^a$ and $y=\log(s(1-w))$ with Jacobian $\lvert \partial(x, y) / \partial(s, w) \rvert = as^{a-1}w^{a-1}(1-w)^{-1}$. We obtain
\begin{align*}
\Cov\left( S^a, \log(T) \right)
&=
  \int_{0}^1 
    \left\{ 
      \int_0^\infty 
	\bigl( \e^{-sA(w)} - \e^{-s} \bigr) a s^{a-1} \, 
      \diff s 
    \right\} 
    w^{a-1}(1-w)^{-1} \, 
  \diff w \\
&=
  \int_{0}^1 
    \bigl( (A(w))^{-a} - 1 \bigr)
    \Gamma(a+1) \,
    w^{a-1}(1-w)^{-1} \, 
  \diff w.
\end{align*}

\medskip\noindent\emph{$\bullet$ Case $H_{k,1}(a,0)$.}
As in the case $H_{k,\ell}(a, b)$, we can interchange expectation (or integration) and differentiation to find
\begin{align*}
\Cov\big( S^a \log(S), \log(T) \big) 
&=
\Cov\left( \frac{\partial^k S^a}{\partial a^k} , \log(T) \right) \\
&=
\frac{\partial^k }{\partial a^k}  \Cov\left( S^a, \log(T) \right) \\
&=
\int_0^1 
  \frac{\partial^k}{\partial a^k } 
  \left\{ 
    \Gamma(a+1) \, 
    \bigl( (A(w))^{-a} - 1 \bigr) \,
    w^{a-1} (1-w)^{-1} 
  \right\} \, 
\diff w.
\end{align*}

\medskip\noindent\emph{$\bullet$ Case $H_{1,1}(0,0)$.} This is \eqref{eq:TdO} and can be found by a similar argument as the one for $H_{0, 1}(a, 0)$.
\end{proof}

Of special interest are the cases $a=b=1$ and $k,\ell \in\{0,1\}$.

\begin{corollary}
\label{cor:cov:A}
Let $(S, T)$ have a min-stable distribution with unit exponential margins and Pickands dependence function $A$. With $H_{k,\ell}(a, b)$ as in \eqref{eq:Hklab} we have
\begin{align*}
H_{0,0}(1,1) 
	&=  \int_{0}^1 \frac{1}{(A(w))^{2}} \, \diff w - 1, \\ 
H_{0,1}(1,1) 
	&= \int_0^1 \frac{1}{(A(w))^2} \Big[ 1 + \log(1-w) + \psi(2) - \log(A(w)) \Big] \, \diff w - \psi(2), \\ 
H_{1,1}(1,1) 
	&=  \int_0^1 \frac{1}{(A(w))^2} \Big[ \psi(2)^2 + 2 \psi(2) + \psi'(2) + 1  \\
  	& \hspace{2cm} + (1+\psi(2)) \big\{ \log(w) + \log(1-w) - 2 \log(A(w)) \big\} \\
	& \hspace{2cm} + \big( \log(w) -  \log(A(w)) \big) \big( \log(1-w) -  \log(A(w)) \big)  \Big] \, \diff w  - \psi(2)^2, \\ 
H_{0,1}(1,0) 
	&=  \int_{0}^1  \frac{1-A(w)}{(1-w) A(w)} \, \diff w, \\ 
H_{1,1}(1,0) 
	&= \int_0^1    \frac{\big(1-A(w)\big) \big( \log(w) + \psi(2) \big) - \log(A(w))  }{(1-w)A(w)}  \, \diff w, \\ 
H_{1,1}(0,0) 
	&=\int_0^1 \frac{ - \log A(w) }{w(1-w)} \diff w. 
\end{align*}
\end{corollary}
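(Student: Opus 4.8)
The plan is to specialize the three formulas of the preceding lemma to $a = b = 1$ and $k, \ell \in \{0, 1\}$ and to carry out the required differentiations by logarithmic differentiation. No new analytic input is needed: the interchange of differentiation with expectation and with the integral over $w$, together with the underlying Fubini and change-of-variables steps, have already been established in the lemma, so the corollary is purely a matter of evaluating derivatives of elementary factors at integer arguments and simplifying.

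First I would treat the three cases coming from the first formula of the lemma, namely $H_{0,0}(1,1)$, $H_{0,1}(1,1)$ and $H_{1,1}(1,1)$. Write $\phi(a,b,w) = ab\,\Gamma(a+b)\,w^{a-1}(1-w)^{b-1}(A(w))^{-(a+b)}$, so the integrand there is $\partial_a^k \partial_b^\ell \phi$ and one subtracts $\Gamma^{(k)}(2)\,\Gamma^{(\ell)}(2)$. Since
\[
  \log \phi(a,b,w) = \log a + \log b + \log\Gamma(a+b) + (a-1)\log w + (b-1)\log(1-w) - (a+b)\log A(w),
\]
one reads off $\partial_a \log\phi = a^{-1} + \psi(a+b) + \log w - \log A(w)$, $\partial_b \log\phi = b^{-1} + \psi(a+b) + \log(1-w) - \log A(w)$ and $\partial_a\partial_b \log\phi = \psi'(a+b)$. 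Evaluating at $a = b = 1$, where $\phi(1,1,w) = (A(w))^{-2}$, $\psi(a+b) = \psi(2)$ and $\Gamma'(2) = \Gamma(2)\psi(2) = \psi(2)$, gives $H_{0,0}(1,1)$ at once; for $H_{0,1}(1,1)$ one uses $\partial_b\phi = \phi\cdot\partial_b\log\phi$; and for $H_{1,1}(1,1)$ one uses $\partial_a\partial_b\phi = \phi\,[(\partial_a\log\phi)(\partial_b\log\phi) + \partial_a\partial_b\log\phi]$, expands the product of the two affine-in-log factors $1 + \psi(2) + \log w - \log A(w)$ and $1 + \psi(2) + \log(1-w) - \log A(w)$, and collects terms. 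The constants subtracted are $\Gamma^{(k)}(2)\Gamma^{(\ell)}(2)$, i.e. $1$, $\psi(2)$ and $\psi(2)^2$, respectively.

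Next I would treat $H_{0,1}(1,0)$ and $H_{1,1}(1,0)$ from the second formula of the lemma. For $H_{0,1}(1,0)$, setting $k = 0$, $a = 1$ yields $\int_0^1 \Gamma(2)\,((A(w))^{-1} - 1)\,(1-w)^{-1}\diff w$, which becomes the stated integral after multiplying numerator and denominator by $A(w)$. For $H_{1,1}(1,0)$, differentiate $g(a,w) = \Gamma(a+1)\,((A(w))^{-a} - 1)\,w^{a-1}$ in $a$ (the factor $(1-w)^{-1}$ being constant in $a$): the three resulting terms come from differentiating $\Gamma(a+1)$, $(A(w))^{-a}$ and $w^{a-1}$, and at $a = 1$ they combine to $((A(w))^{-1} - 1)(\psi(2) + \log w) - (A(w))^{-1}\log A(w)$; dividing by $1 - w$ and clearing the factor $A(w)$ gives the claim. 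Finally, $H_{1,1}(0,0)$ is verbatim the last formula of the lemma, i.e. equation~\eqref{eq:TdO}.

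There is no genuine obstacle; the only step demanding care is the algebraic simplification for $H_{1,1}(1,1)$, where the cross term $(\partial_a\log\phi)(\partial_b\log\phi)$ expands into a product of two trinomials and one must regroup it exactly as $\psi(2)^2 + 2\psi(2) + \psi'(2) + 1 + (1 + \psi(2))\{\log w + \log(1-w) - 2\log A(w)\} + (\log w - \log A(w))(\log(1-w) - \log A(w))$ before subtracting $\psi(2)^2$.
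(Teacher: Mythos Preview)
Your proposal is correct and is exactly the intended derivation: the paper states Corollary~\ref{cor:cov:A} without an explicit proof, since it follows by specializing the preceding lemma to $a=b=1$ and $k,\ell\in\{0,1\}$ and carrying out the differentiations, just as you describe. Your use of logarithmic differentiation of $\phi(a,b,w)$ is the cleanest way to organize the computation, and your check of the algebra for $H_{1,1}(1,1)$ is accurate.
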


Of further special interest is the Marshall--Olkin Pickands dependence function $A_\xi$ in~\eqref{eq:MO:A}. Write $H_{k,\ell}(a, b; \xi)$ for the covariance in \eqref{eq:Hklab} if $A = A_\xi$ with $\xi \in [0, 1]$. 
%
%
The asymptotic covariance matrix of the maximum likelihood estimator based on sliding blocks involves the integrals of $H_{k,\ell}(a, b; \xi)$ over $\xi \in [0, 1]$.

\begin{corollary} 
\label{cor:cov:int}
We have
\begin{align*}
\int_0^1 H_{0,0}(1,1;\xi) \, \diff \xi
	&=  2\log(2)-1, \\ 
\int_0^1 H_{0,1}(1,1;\xi) \, \diff \xi
	&=  \frac{\pi^2}{12} - \log^2(2) + (1-\gamma)( 2\log(2) -1),  \\ 
\int_0^1 H_{1,1}(1,1;\xi) \, \diff \xi
	&= 2 \log(2) \Big( \psi(2)^2 + \frac{\pi^2}{6} - \psi(2) \log(2) + \log^2(2)/3 \Big) \\
	&\qquad \mbox{} + \psi(2) \frac{\pi^2}{6} - \frac{7}{4}\zeta(3) - \psi(2)^2, \\
\int_0^1 H_{0,1}(1,0;\xi) \, \diff \xi
	&=   \frac{\pi^2}{12}  + 1-2\log(2), \\ 
\int_0^1 H_{1,1}(1,0;\xi) \, \diff \xi
	&= (1+\psi(2))\frac{\pi^2}{12}+\log^2(2)-2\psi(2)\log(2)+\psi(2)-\frac{7}{8}\zeta(3),  \\
\int_0^1 H_{1,1}(0,0;\xi) \, \diff \xi
	&= 4 \log(2) -2.   
\end{align*}
\end{corollary}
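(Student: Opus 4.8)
The plan is to start from the six integral representations in Corollary~\ref{cor:cov:A}, substitute the Marshall--Olkin Pickands function $A=A_\xi$, and then integrate over $\xi\in[0,1]$, interchanging the order of the two integrations. The interchange is legitimate because $1/2\le A_\xi(w)\le 1$ uniformly and because $\log w,\log(1-w)$ are integrable on $(0,1)$, so Tonelli/Fubini applies termwise. The structural point that makes the $\xi$-integration elementary is that $A_\xi$ is symmetric about $w=1/2$ and \emph{affine} in $\xi$: writing $M(w)=w\vee(1-w)$ (so $M(w)=1-w$ on $(0,1/2]$ and $M(w)=w$ on $[1/2,1)$), we have $A_\xi(w)=M(w)+\xi\bigl(1-M(w)\bigr)$, so that as $\xi$ runs over $[0,1]$ the value $A_\xi(w)$ increases monotonically from $M(w)$ to $1$. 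Each integrand in Corollary~\ref{cor:cov:A} is, for fixed $w$, a product of a factor free of $\xi$ (a power of $w$ or $1-w$, a $\log w$, etc.) with a function of $A_\xi(w)$ of the form $A^{-2}$, $A^{-1}$ or $\log A$, possibly multiplied by a polynomial in $\log A$. Hence, after the substitution $v=A_\xi(w)$, $\diff v=(1-M(w))\,\diff\xi$, the inner integral over $\xi$ becomes $\tfrac{1}{1-M(w)}\int_{M(w)}^1 (\cdots)\,\diff v$ with $(\cdots)\in\{v^{-2},v^{-2}\log v,v^{-2}\log^2 v,v^{-1},v^{-1}\log v,\log v\}$, each of which has an explicit primitive. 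For instance, for $H_{0,0}(1,1;\xi)$ one gets $\int_0^1 A_\xi(w)^{-2}\,\diff\xi=1/M(w)$, and $\int_0^1\diff w/M(w)=2\log 2$, so $\int_0^1H_{0,0}(1,1;\xi)\,\diff\xi=2\log2-1$.

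Carrying out this first step leaves, in every case, a single integral over $w\in(0,1)$ whose integrand is a rational function of $w$ times a polynomial in $\log w$, $\log(1-w)$ and $\log M(w)$. I would then use the symmetry $A_\xi(w)=A_\xi(1-w)$ to fold the $w$-integral onto $[1/2,1]$, where $M(w)=w$; the asymmetric factors $\log(1-w)$, $(1-w)^{-1}$ coming from Corollary~\ref{cor:cov:A} are converted, after the substitution $u=1-w$ on the reflected half, into standard integrals over $(0,1/2)$. These evaluate in terms of $\log 2$, $\pi^2/6=\zeta(2)$, the dilogarithm value $\mathrm{Li}_2(1/2)=\pi^2/12-\tfrac12\log^2 2$, and — for the two cases carrying a $\log^2$ factor — the trilogarithm value $\mathrm{Li}_3(1/2)=\tfrac78\zeta(3)-\tfrac{\pi^2}{12}\log2+\tfrac16\log^3 2$; concretely, integrals such as $\int_0^{1/2}\frac{\log^2(1-u)}{u}\,\diff u=\tfrac14\zeta(3)-\tfrac13\log^3 2$ are where Apéry's constant enters $\int_0^1H_{1,1}(1,1;\xi)\,\diff\xi$ and $\int_0^1H_{1,1}(1,0;\xi)\,\diff\xi$. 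Finally one adds back the $\xi$-free constants $-\Gamma^{(k)}(1+a)\,\Gamma^{(\ell)}(1+b)$ (equivalently $-\psi(2)$ and $-\psi(2)^2$ for $a=b=1$), simplifies using $\psi(2)=1-\gamma$ and $\psi'(2)=\pi^2/6-1$, and collects terms into the stated closed forms (a few of which are quoted with $\psi(2)$ left explicit rather than substituted).

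The cases $H_{0,0}(1,1;\xi)$, $H_{1,1}(0,0;\xi)$ and $H_{0,1}(1,0;\xi)$ are short: the inner $\xi$-integral is immediate and the outer $w$-integral reduces to $\int\diff w/(w\vee(1-w))$ and $\int -\log(w\vee(1-w))/(w(1-w))\,\diff w$-type integrals, which evaluate after a cancellation of logarithmically divergent pieces near $w=1$ (for $H_{1,1}(0,0;\xi)$ this already gives $4\log 2-2$). The main obstacle is the bookkeeping in the two $\log^2$-cases $\int_0^1 H_{1,1}(1,1;\xi)\,\diff\xi$ and $\int_0^1 H_{1,1}(1,0;\xi)\,\diff\xi$: after the $\xi$-integration the $w$-integrand is a long sum, several of whose summands are individually non-integrable at $w=0$ or $w=1$ and must be grouped before evaluation, and one has to track exactly which combinations of $\mathrm{Li}_3(1/2)$, $\log^3 2$ and $\pi^2\log 2$ survive so that the answers collapse to expressions in $\log 2$, $\pi^2$ and $\zeta(3)$ only. (As a cross-check and an alternative route, one may bypass Corollary~\ref{cor:cov:A}: since $\int_0^1\Prob_\xi(S>s,T>t)\,\diff\xi=\bigl(\e^{-(s\vee t)}-\e^{-(s+t)}\bigr)/(s\wedge t)$, Hoeffding's covariance identity writes each $\int_0^1 H_{k,\ell}(a,b;\xi)\,\diff\xi$ directly as a double integral over $(0,\infty)^2$ against this kernel, which on splitting into $\{s<t\}$ and $\{s>t\}$ leads to the same elementary integrals.)
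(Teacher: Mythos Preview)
Your proposal is correct and follows essentially the same route as the paper: substitute $A=A_\xi$ into the integral representations of Corollary~\ref{cor:cov:A}, interchange the $\xi$- and $w$-integrals, and evaluate. The paper's own proof says exactly this and then omits the algebraic details ``for brevity'', relying on a numerical check; you go further by explaining concretely \emph{how} the inner $\xi$-integral becomes elementary via the affine parametrisation $A_\xi(w)=M(w)+\xi(1-M(w))$ and by identifying the polylogarithm values $\mathrm{Li}_2(1/2)$ and $\mathrm{Li}_3(1/2)$ that produce the $\pi^2$ and $\zeta(3)$ terms, so your write-up is in fact more informative than the paper's at this point.
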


\begin{proof}
The expressions of $H_{k,\ell}(a, b; \xi)$ follow from Corollary~\ref{cor:cov:A} with $A = A_\xi$. Integrating over $\xi$ yields a double integral over $\xi$ and $w$. By interchanging integration with respect to $\xi$ and $w$ if necessary, we can calculate all six integrals explicitly. The algebraic details are tedious and are omitted for brevity. All formulas have been checked numerically. 
\end{proof}

\section*{Acknowledgments}

The authors would like to thank three anonymous referees and an Associate Editor for their constructive comments on an earlier version of this manuscript. Axel Bücher gratefully acknowledges support by the Collaborative Research Center ``Statistical modeling of nonlinear dynamic processes'' (SFB 823) of the German Research Foundation. Johan Segers gratefully acknowledges funding by contract ``Projet d'Act\-ions de Re\-cher\-che Concert\'ees'' No.\ 12/17-045 of the ``Communaut\'e fran\c{c}aise de Belgique'' and by IAP research network Grant P7/06 of the Belgian government.

\end{document}